\definecolor{verde}{HTML}{288906}
\pgfplotsset{compat=1.11}
\definecolor{tempcolor}{RGB}{224,235,239}
\title[On the weak and strong Lefschetz properties for $\uppercase {R}/\init(\uppercase {I}_t)$]{On the weak and strong Lefschetz properties for initial ideals of determinantal ideals with respect to diagonal monomial orders}
\author{Hongmiao Yu}
\address{Vietnam Institute for Advanced Study in Mathematics, Hanoi, Vietnam}
\email{hongmiaoyu@hotmail.com}
\subjclass{13C40, 13C70.}
\newtheoremstyle{break}
{}{}
{\slshape}{}
{\bfseries}{.}
{5pt}{}
\theoremstyle{definition}
\newtheorem{theorem}{Theorem}[section]
\newtheorem{lemma}[theorem]{Lemma}
\newtheorem{proposition}[theorem]{Proposition}
\newtheorem{corollary}[theorem]{Corollary}
\newtheorem*{theorem*}{Theorem}
\newtheorem*{theoremMain}{Main Theorem}
\newtheorem*{theorem1}{Theorem \ref{maintheorem}}
\newtheorem{definition}[theorem]{Definition}
\theoremstyle{remark}
\newtheorem{remark}[theorem]{Remark}
\newtheorem{example}[theorem]{Example}
\newtheorem{examples}[theorem]{Examples}
\newtheorem{notation}[theorem]{Notation}
\newtheorem{question}{Question}
\def\N{\mathbb N}
\def\R{\mathbb R}
\def\Z{\mathbb Z}
\def\mm{\mathfrak m}
\def\H{H} 
\def\se{\subseteq}
\def\HF{\mathrm {HF}}
\def\la{\longrightarrow}
\def\:{\colon}
\def\iso{\cong}
\def\Hom{\mathrm{Hom}}
\def\reg{\mathrm{reg}}
\def\Tor{\mathrm{Tor}}
\def\Ext{\mathrm{Ext}}
\def\soc{\mathrm{Soc}}
\def\mini{\mathrm{min}}
\def\init{\mathrm{in}}
\def\gin{\mathrm{gin}}
\def\lk{\mathrm{lk}}
\def\relint{\mathrm{relint}}
\def\conv{\mathrm{conv}}
\def\deg{\mathrm{deg}}
\def\h{k}
\def\cocoa{\mbox{\rm 
   C\kern-.13em o\kern-.07 em C\kern-.13em o\kern-.15em A}}
\newcommand\fg[0]{finitely generated{}}
\newcommand\V[1]{\mathrm{Vert}({#1})}
\begin{document}
\begin{abstract}
We study the weak and strong Lefschetz properties for  $R/\init(I_t)$, where $I_t$ is the ideal of a polynomial ring $R$ generated by the $t$-minors of an $m\times n$  matrix of indeterminates, and $\init(I_t)$ denotes the initial ideal of  $I_t$ with respect to a diagonal monomial order.  We show that when $I_t$ is generated by maximal minors (that is, $t=\mini\{m,n\}$), the Stanley–Reisner ring $R/\init(I_t)$ has the strong Lefschetz property for all $m,n$. In contrast, for $t<\mini\{m,n\}$, we provide a bound such that  $R/\init(I_t)$ fails to satisfy the weak Lefschetz property whenever the product $mn$ exceeds this bound. As an application, we present counterexamples that provide a negative answer
to a question posed by Murai regarding the preservation  of Lefschetz properties under square-free Gr\"obner degenerations.

\end{abstract}

\maketitle

\section{Introduction} \label{intro}
The study of algebraic Lefschetz properties is motivated by the Hard Lefschetz Theorem in algebraic topology \cite{Le} (see also \cite{Hod}) and has become an important topic in commutative algebra, algebraic geometry, and combinatorics.
In general, the  weak and strong Lefschetz properties are considered over a standard graded Artinian algebra. Here we recall a generalized definition: 
Let $R$ be a standard graded polynomial ring, $I$ a homogeneous ideal of $R$, and let $d$ be the Krull dimension of $R/I$. We say that $R/I$ has the \textit{weak Lefschetz property (WLP)} if $R/I$ is Cohen–Macaulay and there exists a linear system of parameters $\underline\theta=\theta_1, \dots, \theta_d\in R_1$ of $R/I$ and a linear form  $ L \in R_1$ such that the multiplication map $$\times L: (R/(I, \underline\theta))_j\la (R/(I, \underline\theta))_{j+1}$$ has maximal rank for all $j$, that is, $\times L$ is either injective or surjective; we say that $A$ has the \textit{strong Lefschetz property (SLP)}  if the multiplication map $$\times L^s: (R/(I, \underline\theta))_j\la (R/(I, \underline\theta))_{j+s}$$ has maximal rank for all $j$ and for all $s$. A linear form $L\in R_1$ for which the multiplication by $L$ has maximal rank in all degrees is called
 a \textit{weak (resp. strong) Lefschetz element} for $R/I$.

An especially noteworthy result regarding the weak and strong Lefschetz properties under Gr\"obner degenerations is the following: 
\begin{lemma}[Wiebe, Murai]\label{WiebeMurai}
Let $R$ be a standard graded polynomial ring over an infinity field $K$, $I$ a homogeneous ideal of $R$ such that $\dim R/I= d$, and let $\init_<(I)$ be the initial ideal of $I$ with respect to a monomial order $<$.  If $R/\init_<(I)$ has the WLP (resp. SLP), then $R/I$ has the WLP (resp. SLP). 
\end{lemma}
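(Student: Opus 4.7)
The strategy is to compare Hilbert functions on both sides of the Gröbner degeneration. Write $G := \init_<(I)$; then $R/I$ and $R/G$ share the same Hilbert function and, in particular, the same Krull dimension $d$. I would first verify that $R/I$ is Cohen--Macaulay. The Gröbner degeneration is realized as a flat family $R[t]/\wt I$ over $\mathbb A^1_K$ (obtained from a weight-order representation of $<$) whose special fiber at $t=0$ is $R/G$ and whose general fibers are isomorphic as graded $K$-algebras to $R/I$. Semicontinuity of projective dimension in this flat family gives $\pd(R/I) \le \pd(R/G)$, so Auslander--Buchsbaum yields $\depth(R/I) \ge \depth(R/G) = d = \dim(R/I)$, forcing $R/I$ to be Cohen--Macaulay.

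Next, let $L_0$ and $\underline\theta_0$ witness the Lefschetz property for $R/G$. Since $K$ is infinite and the conditions ``$\underline\theta$ is an lsop for both $R/I$ and $R/G$'' and ``$\times L^s$ has maximal rank in every degree on $R/(G,\underline\theta)$ for every $s$'' are each nonempty Zariski open subsets of the space of $(d+1)$-tuples of linear forms, I can choose $(L,\underline\theta)$ satisfying all of them simultaneously. Because both $R/I$ and $R/G$ are Cohen--Macaulay of the same Hilbert series and $\underline\theta$ is a linear lsop (hence a regular sequence) for each, the Hilbert series of $R/(I,\underline\theta)$ equals that of $R/(G,\underline\theta)$.

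The crux is the Hilbert-function semicontinuity bound
$$\HF\bigl(R/(I,\underline\theta,L^s),\, j\bigr)\ \le\ \HF\bigl(R/(G,\underline\theta,L^s),\, j\bigr)$$
for every $j$ and $s \ge 1$, obtained by lifting $\underline\theta$ and $L^s$ into $R[t]$, adjoining them to $\wt I$, and applying upper semicontinuity of graded fiber dimensions in the resulting family over $\mathbb A^1_K$. On the other hand, rank--nullity for $\times L^s$ on $R/(I,\underline\theta)$ yields the lower bound
$$\HF\bigl(R/(I,\underline\theta,L^s),\, j\bigr)\ \ge\ \max\bigl\{0,\ \HF(R/(I,\underline\theta), j) - \HF(R/(I,\underline\theta), j-s)\bigr\},$$
and the analogous inequality for $G$ becomes an equality by the Lefschetz hypothesis on $R/G$. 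Combining these with the Hilbert-series equality of $R/(I,\underline\theta)$ and $R/(G,\underline\theta)$ forces equality throughout, so $\times L^s$ has maximal rank on $R/(I,\underline\theta)$ for every $s$; taking $s=1$ gives WLP and the full range gives SLP for $R/I$.

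The main obstacle is the technical setup of the flat Gröbner family --- passing from the monomial order $<$ to a weight vector $w$ with $\init_w(I) = \init_<(I)$, homogenizing $I$ to produce $\wt I$, and verifying graded upper semicontinuity of fiber Hilbert functions within it. Once that standard machinery is in place, the chain of Hilbert-function inequalities closes the argument uniformly for WLP and SLP.
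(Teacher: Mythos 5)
The paper does not prove this lemma; it quotes it from Wiebe \cite[Proposition 2.9]{Wi} (Artinian case) and Murai \cite[Lemma 3.3]{Mu} (arbitrary dimension), so there is no internal proof to compare your attempt against.

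Your plan is the standard Gr\"obner-degeneration argument and its overall architecture is sound: Cohen--Macaulayness of $R/I$ follows from $\beta_{ij}(R/I)\le\beta_{ij}(R/\init_<(I))$ via Auslander--Buchsbaum; the Hilbert series of the Artinian reductions agree; and one closes the squeeze between the rank--nullity lower bound on $\HF(R/(I,\underline\theta,L^s),j)$ and the upper bound coming from degeneration, uniformly in $s$. The step I would flag is the central inequality $\HF(R/(I,\underline\theta,L^s),j)\le \HF(R/(G,\underline\theta,L^s),j)$, which you state with the \emph{same} forms $(\underline\theta,L)$ on both sides and justify by lifting those forms constantly into $R[t]$ alongside $\widetilde I$. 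In that family the fiber over $t=c\ne 0$ is $R/(\phi_c(I),\underline\theta,L^s)\cong R/(I,\phi_c^{-1}\underline\theta,(\phi_c^{-1}L)^s)$, where $\phi_c\colon x_i\mapsto c^{-w_i}x_i$ is the rescaling defining the degeneration; so upper semicontinuity compares the special fiber to $R/(I,\cdot\,,\cdot^s)$ evaluated along the \emph{curve} $c\mapsto(\phi_c^{-1}\underline\theta,\phi_c^{-1}L)$, not at a prechosen generic tuple, and the inequality as literally stated is not what the family delivers. This is not fatal: the rest of the argument only needs the \emph{generic} value of $\HF(R/(I,\cdot\,,\cdot^s),j)$, which by upper semicontinuity over $(R_1)^{d+1}$ is minimal among all choices and hence is bounded above by the value along the curve, which in turn is $\le\HF(R/(G,\underline\theta,L^s),j)$. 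You should insert this extra layer of semicontinuity explicitly; with it in place the remainder closes as you describe, yielding WLP at $s=1$ and SLP for all $s$.
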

This result was first proved by Wiebe in 2004 for the case $d=0$ \cite[Proposition 2.9]{Wi}, and was later generalized to all $d\ge0$ by Murai \cite[Lemma 3.3]{Mu}. However, the converse does not hold in general. Therefore, in light of Conca and Varbaro's result \cite[Corollary 2.11 (iii)]{CV}  which states that
  if $\init_<(I)$ is a square-free monomial ideal  then $R/I$ is Cohen–Macaulay if and only if $R/\init_<(I)$ is Cohen–Macaulay, in a private conversation, Murai posed the following question:
\begin{question}[Murai]\label{muquestion}
Suppose that $\init_{<}(I)$ is a square-free monomial ideal and that $R/I$ has the WLP (resp. SLP). Does it follow that $R/\init_{<}(I)$ also has the WLP (resp. SLP)?
\end{question}
The motivation for this paper stems from this question. In addressing it, we study the Lefschetz properties for a particular Stanley–Reisner ring:
Let $K$ be a  field of characteristic zero,  $X=(X_{i,j})_{1\le i\le m, 1\le j\le n}$ an $m\times n$ matrix of indeterminates and let $R= K[X]$ be a standard graded polynomial ring over $K$. For a  positive integer $t$ such that $2\le t\le \mini\{m,n\}$ and $t<\max\{m,n\}$, denote by $I_t$  the ideal of $R$ generated by the $t$-minors of $X$, and  denote by $\init(I_t)$  the initial ideal  of  $I_t$ with respect to a diagonal monomial order.
These initial ideals are known to be square-free (see \cite[Corollary 3.4]{Na})  and their corresponding quotient rings $R/\init(I_t)$ are 
Cohen–Macaulay  (see, for example, \cite[Theorem 4.4.5]{BCRV}). 
In this paper, we show that
\begin{theoremMain}
 If $t=\mini\{m,n\}$,  then $R/\init(I_t)$ has the SLP for all $m,n$;\\
 if $t<\mini\{m,n\}$,  then $R/\init(I_t)$ fails the WLP when $t,m,n$ satisfy one of the following conditions:
\begin{itemize}
\item[$i$)] $t=2$ and $mn\ge 16$,
\item[$ii$)] $t=3$ and $mn\ge 24$,
\item[$iii$)] $t\ge4$ and $mn\ge (t+1)(t+2)$.
\end{itemize}
\end{theoremMain}
This result will be proved through  \Cref{casem=n} and \Cref{lasttheorem}. A crucial step in proving \Cref{lasttheorem} is the following statement, which is the main theorem  of \Cref{sectionmain}:
\begin{theorem}\label{maintheorem}
For all $t$ such that $2\le t \le \mini\{m,n\}$ and $t< \max\{m,n\}$,  the graded Betti number $$\beta_{h, h+t-1}(R/\init(I_t))\ge t,$$ where $h=(m-t+1)(n-t+1)$ is the height of $\init(I_t)$.
\end{theorem}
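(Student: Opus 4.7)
The plan is to apply Hochster's formula to re-express this Betti number combinatorially. Let $\Delta$ denote the simplicial complex on $V = [m] \times [n]$ whose Stanley--Reisner ideal is $\init(I_t)$; a face of $\Delta$ is any subset of $V$ that contains no strict $t$-staircase (i.e., no $t$ positions whose row and column coordinates can be simultaneously ordered to be strictly increasing). Hochster's formula yields
\[
\beta_{h,\,h+t-1}(R/\init(I_t)) \;=\; \sum_{\substack{W \subseteq V \\ |W| = h+t-1}} \dim_K \tilde{H}_{t-2}(\Delta_W; K),
\]
so the task reduces to producing enough subsets $W$ that contribute to this sum.

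The key combinatorial observation is a skeleton computation. If $C = \{c_0, c_1, \ldots, c_t\} \subseteq V$ is a strict chain of length $t+1$---meaning its elements admit a linear order in which both coordinates strictly increase---then every $t$-subset of $C$ is itself a strict $t$-staircase and hence a minimal generator of $\init(I_t)$, while every subset of size at most $t-1$ is automatically a face. Thus $\Delta_C$ is precisely the $(t-2)$-skeleton of the $t$-simplex on $C$, and a standard computation gives $\dim_K \tilde{H}_{t-2}(\Delta_C; K) = \binom{t}{t-1} = t$. When $h = 2$---i.e., $\{m, n\} = \{t, t+1\}$ up to transposition---the subset $W = C$ already has the required cardinality $t+1$ and the conclusion is immediate from Hochster.

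For general $m, n, t$ I would extend a strict chain $C$ of length $t+1$ to a subset $W \supseteq C$ of size $h + t - 1$ by iteratively appending vertices that either (i) lie ``on'' the chain (strictly below $c_0$, strictly above $c_t$, or strictly between consecutive $c_i$'s), thereby extending the strict chain, or (ii) lie in regions of $V$ incomparable to enough of $C$ so that no $(t-1)$-subset of $C$ becomes the boundary of a new $(t-1)$-face of $\Delta_W$. An alternative route is to exhibit $t$ distinct subsets $W_k$, each contributing at least $1$ to the Hochster sum, obtained by placing a chain of length $t+1$ in $t$ different positions within the matrix.

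The main obstacle is this extension step: each added vertex can create new $(t-1)$-faces of $\Delta_W$ that potentially bound the $(t-2)$-cycles responsible for the contribution $t$ from $\Delta_C$, and the crux of the argument is to verify that at least $t$ of those cycles always survive---equivalently, that the critical $(t-1)$-subsets of $C$ are never completely filled in by the added vertices. As a consistency check, in the maximal-minors case $t = \mini\{m, n\}$ the Eagon--Northcott resolution of $I_t$ (whose graded Betti numbers coincide with those of $\init(I_t)$) yields $\beta_{h, h+t-1} = \binom{\max\{m, n\} - 1}{t - 1}$, which already exceeds $t$ under the hypothesis $t < \max\{m, n\}$, giving an independent verification of the bound in that regime.
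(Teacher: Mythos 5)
Your starting point---Hochster's formula and the observation that for a strict chain $C$ of $t+1$ points the restriction $\Delta_C$ is the $(t-2)$-skeleton of the $t$-simplex, hence $\dim_K \tilde H_{t-2}(\Delta_C;K)=\binom{t}{t-1}=t$---is correct and pleasant, and it does settle the case $h=2$ (i.e., $t=\min\{m,n\}$, $\max\{m,n\}=t+1$). The Eagon--Northcott consistency check at the end is also fine.

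The problem is the extension step, and it is not merely unfinished: for $t=2$ and $m,n\ge 3$ it \emph{cannot} work. Here $h=(m-1)(n-1)\ge 4$, so $|W|=h+1\ge 5$, and the paper's Proposition \ref{caset2} together with Remark \ref{rmk2mn} shows that for \emph{every} $W\subseteq V$ with $|W|=h+1$ one has $\dim_K\tilde H_0(\Delta(2,m,n)_W;K)\le 1$, with equality only for the two specific sets $V_0(2,m,n)$ and $V_1(2,m,n)$. So no single $W$ of the required size can carry a $t$-dimensional $\tilde H_{t-2}$; once you add vertices beyond the bare chain $C=\{c_0,c_1,c_2\}$, they glue at least two of the three points together, no matter how you choose them. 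The only viable way through is the ``alternative route'' you gesture at in one sentence but do not develop: exhibit $t$ \emph{distinct} subsets, each contributing $\ge 1$. That is exactly what the paper does: it constructs $t$ explicit vertex sets $V_a(t,m,n)$, $a=0,\dots,t-1$, of size $h+t-1$ (Subsection \ref{subsec3}), so that $\beta_{h,h+t-1}\ge\sum_a\dim_K\tilde H_{t-2}(\Omega_a(t,m,n);K)$ by Hochster (Lemma \ref{small}), and then proves $\dim_K\tilde H_{t-2}(\Omega_a(t,m,n);K)\ge 1$ by induction on $t$ (Lemma \ref{mainlemma}): the link of the corner vertex $(1,1)$ or $(m,n)$ in $\Omega_a(t,m,n)$ is isomorphic to some $\Omega_b(t-1,m-1,n-1)$, and a Mayer--Vietoris/relative-homology argument (combined with the fact that the $(t-2)$-skeleton $\Gamma_{\le t-2}$ of the deleted complex $\Gamma=\Omega\setminus F$ is shellable, hence Cohen--Macaulay, hence has vanishing $\tilde H_{t-3}$) lets the nonvanishing propagate up. So: the skeleton observation is a genuine and correct insight, but the ``single $W$ with contribution $t$'' plan is blocked already at $t=2$, and a complete proof must take the multi-set route that the paper carries out.
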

Additionally, as an application of the Main Theorem, in  \Cref{answer} we answer Murai’s question in the negative  by showing that
\begin{center}
\textit{For $m=n\ge t+2$, $R/\init(I_t)$ fails the WLP while $R/I_t$ has the SLP.}
\end{center} 

The rest of this paper is organized as follows: \Cref{sectionpre} collects basic definitions and results that will be used throughout the paper. In particular,  in this section we introduce and study a special subcomplex  $\Omega_a(t, m,n)$ of the simplicial complex defined by $\init(I_t)$. The goal of  \Cref{sectionmain} is to prove \Cref{maintheorem} by using Hochster's formula and some properties of $\Omega_a(t, m,n)$. \Cref{lef} contains detailed discussion on the Lefschetz properties for $R/\init(I_t)$ and the other results mentioned above. Several calculations were carried out using the computer algebra system Macaulay2 \cite{M2}, and the corresponding code is provided in \Cref{M2code}.

\section{Preliminaries}\label{sectionpre}
\subsection{Some results on the weak and strong Lefschetz properties} 
The following lemma is simple, but it illustrates the connection between Lefschetz properties and graded Betti numbers, which is crucial for understanding the main idea of the paper.
\begin{lemma}\label{rmkapp}
Let  $S=K[X_1,\dots, X_N]$ be a standard graded polynomial ring,  $J\se S$ a homogeneous ideal of height $h$ such that  $S/J$ is Cohen-Macaulay, and let $\underline\theta=\theta_1,\dots, \theta_{N-h}\in S_1$ be  a linear system of parameters of $S/J$.
If $\beta_{h,h+j}(S/J)\not=0$ for some $j\ge 0$, then the multiplication map $$\times L^s: \Big[S/(J, \underline\theta)\Big]_{j}\la \Big[S/(J, \underline\theta)\Big]_{j+s}$$ fails to be injective for every linear form $L$ and for every $s\ge 1$.\end{lemma}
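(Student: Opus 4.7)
The plan is to show that $\soc(\bar A)_j \neq 0$ for $\bar A := S/(J, \underline\theta)$; once this is established, the conclusion is immediate, since a nonzero socle element of degree $j$ is annihilated by every linear form of $S$ (acting through its image in $\bar A_1$), hence by every $L^s$, and so lies in $\ker(\times L^s \colon (\bar A)_j \to (\bar A)_{j+s})$.

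The first step is to pass to a polynomial ring of Krull dimension $h$. Since the $N-h$ forms $\underline\theta$ are $K$-linearly independent (as a system of parameters), after a linear change of coordinates I may assume $\underline\theta = X_{h+1}, \dots, X_N$, so that $\bar S := S/(\underline\theta) \cong K[X_1, \dots, X_h]$ is again a standard graded polynomial ring and $\bar A = \bar S/\bar J$ is Artinian. In the Cohen--Macaulay setting in which this lemma is applied, $\underline\theta$ is a regular sequence on $S/J$; tensoring a minimal free $S$-resolution of $S/J$ with $\bar S$ therefore yields a minimal free $\bar S$-resolution of $\bar A$, giving the identity
$$\beta_{h, h+j}^S(S/J) = \beta_{h, h+j}^{\bar S}(\bar A).$$

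The second step is to compute this top graded Betti number of $\bar A$ over $\bar S$ using the Koszul complex $\mathrm{Kos}_\bullet$ on $X_1, \dots, X_h$, which is the minimal $\bar S$-free resolution of the residue field $K$. Since $\bar A$ is Artinian and $\bar S$ has Krull dimension $h$, the projective dimension of $\bar A$ over $\bar S$ equals $h$, and tracking the grading shift in the top module $\mathrm{Kos}_h = \bar S(-h)$ yields
$$\beta_{h, h+j}^{\bar S}(\bar A) = \dim_K H_h(\bar A \otimes_{\bar S} \mathrm{Kos}_\bullet)_{h+j} = \dim_K \soc(\bar A)_j.$$
Combining the two displayed equalities, the hypothesis $\beta_{h, h+j}(S/J) \neq 0$ produces a nonzero element $\alpha \in \soc(\bar A)_j$, which completes the argument as described in the first paragraph.

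The main delicate point is the first step: reduction modulo a regular sequence of linear forms must preserve the minimal free resolution, and hence all graded Betti numbers. This is a standard consequence of Nakayama's lemma once one knows that $\underline\theta$ is $(S/J)$-regular, which is where the Cohen--Macaulay hypothesis (implicit in the applications) enters. The second step is a purely formal Koszul computation, and the concluding annihilation argument requires no additional input.
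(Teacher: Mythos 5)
Your proof is correct and follows essentially the same plan as the paper's: identify $\dim_K \soc(S/(J,\underline\theta))_j$ with $\beta_{h,h+j}(S/J)$, and then observe that a nonzero socle element of degree $j$ lies in $\ker(\times L^s)$ for every linear form $L$ and every $s\ge 1$. The only divergence is in how the socle--Betti identification is bookkept: you first pass to $\bar S = S/(\underline\theta)\cong K[X_1,\dots,X_h]$ by tensoring a minimal $S$-free resolution of $S/J$ with $\bar S$ (using that $\underline\theta$ is $(S/J)$-regular), then resolve $K$ over $\bar S$ by its Koszul complex and read $\Tor_h^{\bar S}(\bar A,K)_{h+j}\cong\soc(\bar A)_j$ from the top of that complex; the paper packages the same Koszul computation as a chain of $\Hom$, $\Ext$, and $\Tor$ isomorphisms over $S$, invoking Rees' lemma for the change of rings and Koszul self-duality for $\Ext\cong\Tor$. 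You are also right to flag the one genuine subtlety: both your tensoring step and the paper's appeal to \cite[Lemma 3.1.16]{BH} require $\underline\theta$ to be a regular sequence on $S/J$, i.e.\ that $S/J$ is Cohen--Macaulay, which the lemma does not state explicitly but which holds in every place it is applied in the paper.
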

\begin{proof}
First recall that for a graded $S$-module $M$, the \textit{socle of $M$} is defined as
\begin{align*}
\soc(M)=0:_M \mm \iso\Hom_S(K, M),
\end{align*}
where $\mm=(X_1,\dots, X_N)$ is the unique homogeneous maximal ideal of $S$.  
It is known that the following graded isomorphisms hold
\begin{align*}
\soc(S/(J, \underline \theta))
\iso& \Hom_S(K, S/(J, \underline \theta))
\iso \Hom_{S/\underline \theta}(K, S/(J, \underline \theta))\\
\iso  &\Ext^{N-h}_S(K, S/J)[-N+h]
\iso \Tor_{h}^S(K, S/J)[h]
\end{align*} (see, for example, \cite[Lemma 3.1.16 and Exercise 3.3.26]{BH}, see also \cite[Proposition 1.6.9 and Proposition 1.6.10]{BH}).
Therefore, 
\begin{align*}
\dim_K \Big[\soc(S/(J, \underline \theta))\Big]_{j}=\dim_K \Big[\Tor_{h}^S(K, S/J)\Big]_{h+j} =\beta_{h,h+j}(S/J)
\end{align*}
 for each $j$, the last equality above follows from \cite[Proposition 1.3.1]{BH}. Thus, by the definition of socle,  if $\beta_{h,h+j}(S/J)\not=0$ then $\ker(\times L^s)\not=0$ for every linear form $L$ and for every $s\ge 1$.
\end{proof}

We now recall the following well-known result  (see, for example,  \cite[Theorem 4.2]{Swa} and \cite[Lemma 3.1]{Mu}).
\begin{lemma}\label{openset}
Let  $S=K[X_1,\dots, X_N]$ be a standard graded polynomial ring over a field of characteristic zero,  $J\se S$ a homogeneous ideal such that $\dim S/J=d$.   If $S/J$ has the WLP (resp. SLP), then there exits a nonempty Zariski open subset $U\se K^{n\times(d+1)}$ such that, for any sequence of linear forms $\theta_1, \dots, \theta_d, L\in U$, we have $\theta_1, \dots, \theta_d$ is a linear system of parameters of $S/J$ and $L$ is a weak (resp. strong) Lefschetz element of $S/J$.
\end{lemma}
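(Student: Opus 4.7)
The plan is to realize the desired subset $U$ as a finite intersection of Zariski-open conditions, each justified by a lower-semicontinuity-of-rank argument, and to invoke the WLP/SLP hypothesis only at the end to ensure nonemptyness.

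First, I would parameterize $(d+1)$-tuples of linear forms by the affine space $V^{d+1}\cong K^{N(d+1)}$, writing $\theta_i=\sum_j a_{i,j}X_j$ and $L=\sum_j a_{d+1,j}X_j$ in coordinates $a_{i,j}$. The locus $U_0\subseteq V^d$ on which $(\theta_1,\dots,\theta_d)$ is a linear system of parameters for $S/J$ is Zariski open and nonempty: since $S/J$ is Cohen--Macaulay (built into the WLP definition), being an lsop is equivalent to being a regular sequence on $S/J$, which amounts inductively to the condition $\theta_i\notin\bigcup_{\mathfrak p\in\mathrm{Ass}(S/(J,\theta_1,\dots,\theta_{i-1}))}\mathfrak p$; each $\mathfrak p\cap S_1$ is a proper $K$-subspace of $V$, so this is the complement of a finite union of proper linear subspaces and is nonempty and Zariski open over the infinite field $K$.

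Second, on $U_0$ the graded Hilbert function of the Artinian reduction $S/(J,\underline\theta)$ is constant, equal to the coefficient sequence of $(1-t)^d\HS(S/J;t)$, because $\underline\theta$ is a regular sequence on a Cohen--Macaulay ring. Let $e$ denote the resulting (constant) socle degree. The Lefschetz condition then involves only the finitely many multiplication maps $\times L^s\colon [S/(J,\underline\theta)]_j\to[S/(J,\underline\theta)]_{j+s}$ with $0\le j\le e-s$ (and for WLP only $s=1$). Working over the base $B:=U_0\times V$ with generic linear forms $\Theta_1,\dots,\Theta_d,\mathcal L$ whose coefficients are the coordinates of $B$, the degree-$j$ piece of $S\otimes_K\mathcal O_B/(J\cdot\mathcal O_B,\Theta_1,\dots,\Theta_d)$ is a coherent sheaf on $B$, locally free of rank $\dim_K[S/(J,\underline\theta)]_j$ by the constancy just established. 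Multiplication by $\mathcal L^s$ defines a morphism of such locally free sheaves, and its fiberwise rank is a lower-semicontinuous function on $B$.

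Third, the locus where $\times L^s$ achieves its maximum possible rank is therefore Zariski open in $B$ for each relevant $(j,s)$. Intersecting over the finitely many such pairs produces the desired Zariski-open $U\subseteq V^{d+1}$. The WLP (resp. SLP) hypothesis is precisely the statement that $U$ contains at least one point; since $V^{d+1}$ is irreducible, $U$ is then automatically dense.

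The main obstacle, and the step requiring the most care, is the local-freeness/constancy claim for the graded pieces of the Artinian reduction as $\underline\theta$ varies in $U_0$: this is what makes ``maximal rank'' a genuinely semicontinuous numerical condition rather than a moving target. The argument relies critically on the Cohen--Macaulay hypothesis; without it, the ranks of $[S/(J,\underline\theta)]_j$ could jump across $U_0$ and the semicontinuity picture would collapse.
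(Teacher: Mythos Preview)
Your argument is correct and follows the standard semicontinuity route. The paper itself does not prove this lemma; it records it as well known and defers to \cite[Theorem 4.2]{Swa} and \cite[Lemma 3.1]{Mu}, so your write-up supplies what the paper omits. The essential ingredients you identify --- openness of the lsop locus via prime avoidance over an infinite field, constancy of the Hilbert function of the Artinian reduction on that locus (this is precisely where the Cohen--Macaulay hypothesis built into the WLP/SLP definition enters), and lower semicontinuity of the rank of a map between locally free sheaves --- are the standard ones, and bounding the relevant pairs $(j,s)$ by the common socle degree is exactly what makes the intersection finite.

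One small point of phrasing: when you say ``maximum possible rank,'' make explicit that you mean rank equal to the minimum of the (constant) source and target dimensions; semicontinuity guarantees this locus is open, and the WLP/SLP hypothesis then furnishes a point in it, so that the finite intersection $U$ is nonempty.
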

Assume for the remainder of this paper that $K$ is a field of characteristic zero. According to \Cref{openset}, to verify the Lefschetz properties for a $d$-dimensional Cohen–Macaulay ring 
$S/J$, it suffices to consider a sequence of general linear forms $\theta_1, \dots, \theta_d, L$, and check whether the multiplication map defined by $L$ on $S/(J, \theta_1, \dots, \theta_d)$ has maximal rank in each degree. 
Therefore, \Cref{rmkapp} and  \Cref{openset} imply that

\begin{corollary}\label{corormkapp}
Let  $S=K[X_1,\dots, X_N]$ be a standard graded polynomial ring,  $J\se S$ a homogeneous ideal of height $h$. If $\beta_{h,h+j}(S/J)\not=0$ for some $j\ge 0$ and, for a sequence of general linear forms  $\underline\theta=\theta_1, \dots, \theta_{N-h}$, the multiplication map $$\times L: \Big[S/(J, \underline\theta)\Big]_{j}\la \Big[S/(J, \underline\theta)\Big]_{j+1}$$ fails to be surjective for any linear form $L$, then $S/J$ fails the WLP.
\end{corollary}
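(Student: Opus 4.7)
The plan is to argue by contradiction using only \Cref{rmkapp} and \Cref{openset}. I would suppose, for contradiction, that $S/J$ has the WLP. Since the definition of WLP forces $S/J$ to be Cohen--Macaulay, we have $\dim S/J = N-h$, and \Cref{openset} produces a nonempty Zariski open subset $U \se K^{N\times(N-h+1)}$ such that every tuple $(\theta_1, \dots, \theta_{N-h}, L) \in U$ consists of a linear system of parameters $\underline\theta = \theta_1, \dots, \theta_{N-h}$ of $S/J$ together with a weak Lefschetz element $L$ for $S/J$.

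Next I would interpret the hypothesis in the standard way: there is a nonempty Zariski open subset $V \se K^{N\times(N-h)}$ such that, for every $\underline\theta \in V$, the multiplication $\times L : [S/(J,\underline\theta)]_j \to [S/(J,\underline\theta)]_{j+1}$ fails to be surjective for every linear form $L$. Letting $\pi : K^{N\times(N-h+1)} \to K^{N\times(N-h)}$ denote the projection onto the first $N-h$ linear forms, the set $U \cap \pi^{-1}(V)$ is the intersection of two nonempty Zariski open subsets of the irreducible affine space $K^{N\times(N-h+1)}$, hence nonempty. I would then pick some $(\theta_1, \dots, \theta_{N-h}, L)$ in this intersection.

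For this choice, $L$ is a weak Lefschetz element for $S/J$, so $\times L$ must have maximal rank in every degree, and in particular in degree $j$; since it fails to be surjective by construction of $V$, it must be injective in degree $j$. On the other hand, \Cref{rmkapp} applied with the same $J$ and the same linear system of parameters $\underline\theta$, together with the hypothesis $\beta_{h,h+j}(S/J) \neq 0$, implies that $\times L^s$ fails to be injective in degree $j$ for every linear form $L$ and every $s \ge 1$; specialising to $s = 1$ and the chosen $L$ contradicts the previous sentence. Hence $S/J$ cannot have the WLP.

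The proof is essentially a direct combination of the two lemmas, so I do not expect a real obstacle; the only point that deserves a sentence of care is the nonemptiness of $U \cap \pi^{-1}(V)$, which uses the irreducibility of the affine space $K^{N\times(N-h+1)}$ together with the fact that $K$ has characteristic zero (hence is infinite, so any nonempty Zariski open subset is dense).
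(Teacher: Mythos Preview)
Your proof is correct and follows exactly the route the paper intends: the corollary is stated in the paper as an immediate consequence of \Cref{rmkapp} and \Cref{openset} with no further argument, and your write-up simply fleshes out that combination carefully, including the standard Zariski-open interpretation of ``general'' and the irreducibility argument ensuring $U \cap \pi^{-1}(V) \neq \emptyset$.
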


Recall that if $M=\oplus_{i\in\Z}M_i$ is a \fg\ graded module over a polynomial ring $S=K[X_1,\dots, X_N]$, then the \textit{Hilbert function of $M$} is  the function $$\HF(M, -): \N\la \N$$ defined by $\HF(M, i)=\dim_K M_i.$
Macaulay’s theorem \cite{Ma} (see also \cite[Section 4.2]{BH}) ensures that an ideal and its initial ideal have the same Hilbert function. 
Moreover, for a  linear form $L$ and for an integer $s\ge 1$, the following sequence
\begin{align*}
    0\la \Bigg[\frac{(J, \underline\theta): L^s}{(J, \underline\theta)}\Bigg]_{j}\la  \Big[S/(J, \underline\theta)\Big]_{j}&\stackrel {\times L^s}\la \Big[S/(J, \underline\theta)\Big]_{j+s}\\
    &\la \Big[S/(J, \underline\theta, L^s)\Big]_{j+s}\la 0\notag
\end{align*}
is exact in each  degree $j\ge 0$. Therefore, 
\begin{align}\label{hfses}
\HF(S/(J, \underline\theta), j+s)-\HF(S/(J, \underline\theta),j)
+\HF(\frac{(J, \underline\theta): L^s}{(J, \underline\theta)}, j)&\\
=\HF(S/(J, \underline\theta, L^s),j+s).&\notag
\end{align}

A slight extension of the argument in Conca’s proof of \cite[Lemma 1.2]{Co} (see also the proof of \cite[Proposition 2.8]{Wi} for a similar discussion) yields the following statement.

\begin{lemma}[Conca]\label{ginconca}
Let  $S=K[X_1,\dots, X_N]$ be a polynomial ring,  and let $J$ be a homogeneous ideal. Let $p$ be an integer with $0\le p\le N$, and let $\theta_1,\dots, \theta_p\in S_1$ be a sequence of $p$ general linear forms. Then 
$$\HF(S/(J, \theta_1^s, \theta_2,\dots, \theta_p), j)=\HF(S/(\gin(J), X_{N-p+1}^s, X_{N-p+2},\dots, X_{N}), j)$$
for all $j\in\N$ and for all $s\ge 1$, where $\gin(J)$ denotes the generic initial ideal of $J$ with respect to the reverse lexicographic order.
\end{lemma}
%

The following remark follows directly from Macaulay’s theorem, \Cref{openset} and  \Cref{hfses}. The Macaulay2 code in \Cref{M2code}, which verifies the weak and strong Lefschetz properties for $R/\init(I_t)$ and for $R/I_t$, is written based on this remark and  on \Cref{WiebeMurai}.

\begin{remark}\label{rmk1.5}
Let  $S=K[X_1,\dots, X_N]$ be a standard graded polynomial ring, and  let $J$ be an ideal of $S$ of height $h$, generated by polynomials of degree at least $2$. Assume that $S/J$ is Cohen-Macaulay. Set $A=K[X_{N-h+1},\dots, X_N]$.
If  $\underline\theta=\theta_1, \dots, \theta_{N-h}\in S_1$ is a sequence of general linear forms and if  $<$ is  the  (degree) lexicographic order or the (degree) reverse lexicographic on  $S$ induced by the ordering $X_1>X_2>\dots>X_N$,
then $$\HF\big(S/(J, \underline\theta), j\big)=\HF\Big(A/\big(\init_<(J, \underline\theta)\cap A\big), j\Big)$$
for all $j\ge 0$.
Moreover, $S/J$ has the WLP if and only if there exists  
a linear form  $ L \in S_1$ such that 
\begin{align*}
&\max\Big\{0,\HF\Big(A/\big(\init_<(J, \underline\theta)\cap A\big),j+1\Big)-\HF\Big(A/\big(\init_<(J, \underline\theta)\cap A\big),j\Big)\Big\}\\
&=\HF\Big(A/\big(\init_<(J, \underline\theta, L)\cap A\big),j+1\Big)
\end{align*}
for all $j\ge 0$; $S/J$ has the SLP if and only if 
\begin{align*}
&\max\Big\{0, \HF\Big(A/\big(\init_<(J, \underline\theta)\cap A\big),j+s\Big)-\HF\Big(A/\big(\init_<(J, \underline\theta)\cap A\big),j\Big)\Big\}\\
&=\HF\Big(A/\big(\init_<(J, \underline\theta, L^s)\cap A\big),j+s\Big)
\end{align*}
for all $j\ge 0$ and for all $s\ge 1$.
\end{remark}

\subsection{Simplicial complexes and Hochster's formula} 

Let $X$ be an $m\times n$ matrix of
indeterminates. Recall that a monomial order on $R=K[X]$ is said to be a \textit{diagonal monomial order} if  the initial monomial of any minor of $X$ is the product of the indeterminates along its main diagonal.  
For example, the  (degree) lexicographic order and the (degree) reverse lexicographic on $R$ induced by the ordering  $X_{1,1}>X_{1,2}>\dots>X_{1,n}> X_{2,1}>\dots>X_{m,n}$ are diagonal monomial orders.

Narasimhan proved that the $t$-minors of  $X$  form a Gr\"obner basis of $I_t$ with respect to a diagonal monomial order \cite[Corollary 3.4]{Na} (for other proofs see also  \cite[Theorem 5.3 and Remark 4.7(c)]{BC2}, \cite{CGG}, and \cite[Theorem 1]{Stu}). Therefore,     the initial ideal  $\init(I_t)$ of  $I_t$ with respect to a diagonal monomial order is a square-free monomial ideal for all $t,m,n$. By the Stanley–Reisner correspondence (see, for example, \cite[section 2.1]{BCRV}), there exists a simplicial complex $\Delta(t,m,n)$ such that the associated Stanley–Reisner  ideal $I_{\Delta(t,m,n)}=\init(I_t)$. This simplicial complex will be described in more detail in \Cref{7441}. Moreover, since  the graded Betti numbers of a Stanley-Reisner ideal can be obtained using Hochster's formula \cite{Ho} (see also \cite[Corollary 5.12]{MS} and \cite[Theorem 5.5.1]{BH}), here we recall the definition of simplicial complexes and Hochster’s formula.

\begin{definition}
A \textit{simplicial complex} $\Delta$ on a finite set $V=\{1,\dots, N\}$ is a collection of subsets of  $V$ such that 
\begin{center}
if $F\in\Delta$ and $G\se F$, then $G\in\Delta$.
\end{center}
The elements of $\Delta$ are called \textit{faces}. The maximal faces under inclusion are called the \textit{facets} of the simplicial complex $\Delta$.
The \textit{dimension} of a face $F\in\Delta$ is defined as $\dim F=\vert F\vert -1$ and the \textit{dimension} of the simplicial complex $\Delta$ is $$\dim\Delta=\max\{\dim F\mid F\in\Delta\}.$$ 
\end{definition}

\begin{definition}\label{SRring}
Let $\Delta$ be a simplicial complex on the vertex set $V=\{1,\dots ,N\}$.
The \textit{Stanley-Reisner ring} of $\Delta$  (with respect to a filed $k$) is the homogeneous $k$-algebra 
$$k[\Delta]=k[X_1,\dots, X_N]/I_\Delta,$$
where $I_\Delta$ is the ideal generated by all monomials $X_{i_1}\dots X_{i_r}$  such that $\{i_1,\dots, i_r\}\not\in\Delta$. The ideal $I_\Delta$ is called  the \textit{Stanley-Reisner ideal} of $\Delta$.

\end{definition}

\begin{lemma}[Hochster's formula]\label{hoch}
Let $\Delta$ be a simplicial complex on the vertex set $V=\{1,\dots ,N\}$ and let $I_\Delta$ be the associated Stanley-Reisner ideal in $k[X_1,\dots,X_N]$. Then 
$$\beta_{i,j}(I_\Delta)=\sum_{\substack{\vert U\vert=j\\ U\se V}}\dim_K\Tilde{H}_{j-i-2}(\Delta_U;k)$$ for each $i,j$, 
where $\Delta_U=\{\sigma\in \Delta\mid \sigma\se U\}$ is the restriction of $\Delta$ to $U$, and $\Tilde{H}_l(\Delta_U; k)$ denotes the $l$-th reduced simplicial homology of $\Delta_U$. 
\end{lemma}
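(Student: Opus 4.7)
The plan is to apply Hochster's formula (\Cref{hoch}) to the Stanley–Reisner complex $\Delta(t,m,n)$ associated with $\init(I_t)$. Using the standard shift $\beta_{h, h+t-1}(R/\init(I_t)) = \beta_{h-1, h+t-1}(\init(I_t))$ and setting $i = h-1$, $j = h+t-1$ in Hochster's formula (so that $j-i-2 = t-2$), the desired inequality reduces to
$$\sum_{\substack{U \subseteq V \\ |U| = h+t-1}} \dim_K \tilde H_{t-2}\bigl(\Delta(t,m,n)_{U}; K\bigr) \ge t,$$
where $V = \{(i,j) : 1 \le i \le m,\, 1 \le j \le n\}$ is the vertex set.

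I would produce this lower bound by exhibiting $t$ distinct vertex subsets $U_1, \dots, U_t \subseteq V$, each of cardinality exactly $h+t-1 = (m-t+1)(n-t+1) + t - 1$, such that $\tilde H_{t-2}(\Delta(t,m,n)_{U_a}; K) \ne 0$ for every $a$. The natural candidates come from the subcomplexes $\Omega_a(t,m,n)$ introduced earlier in \Cref{sectionpre}: as the parameter $a$ ranges over $t$ admissible values, I expect the supports of these subcomplexes (or slight enlargements thereof) to yield $t$ distinct vertex sets of the required size. For each such $U_a$, the goal is to identify $\Delta(t,m,n)_{U_a}$ with a triangulation of a $(t-2)$-sphere, most cleanly by realizing it as the boundary of a $(t-1)$-simplex whose unique minimal nonface is the diagonal of a carefully selected $t$-minor that sits inside $U_a$; this would immediately give $\dim_K \tilde H_{t-2}(\Delta(t,m,n)_{U_a}; K) \ge 1$.

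Concretely, I would proceed in three steps. First, I would unpack the combinatorial description of $\Omega_a(t,m,n)$ and check the cardinality count $|U_a| = h + t - 1$. Second, for each $a$ I would list the facets of $\Delta(t,m,n)_{U_a}$ by tracking which diagonals of $t$-minors are entirely contained in $U_a$, and use this to pin down its homotopy type, ideally via an explicit deformation retraction onto the boundary of a single simplex or via a collapsing/cone argument. Third, I would verify that the subsets $U_1, \dots, U_t$ are pairwise distinct, so that Hochster's formula collects at least $t$ separate nonzero contributions.

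The main obstacle will be the second step: determining the precise homotopy type of $\Delta(t,m,n)_{U_a}$. A careless choice of $U_a$ could either introduce additional minimal nonfaces that destroy the intended $(t-2)$-cycle, or contribute extraneous higher-dimensional cycles that obscure the count. Overcoming this will require the detailed structural properties of $\Omega_a(t,m,n)$ established in \Cref{sectionpre} to ensure that, inside $U_a$, exactly one $t$-minor has its diagonal fully present while all other potential diagonals miss $U_a$ in at least one vertex, thereby isolating a single boundary-of-simplex structure and giving nonvanishing $\tilde H_{t-2}$ cleanly for each of the $t$ choices of $a$.
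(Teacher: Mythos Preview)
Your proposal does not address the stated lemma at all. The statement you were asked to prove is Hochster's formula itself (\Cref{hoch}), a classical result which the paper does not prove but simply cites from the literature (\cite{Ho}, \cite[Corollary 5.12]{MS}, \cite[Theorem 5.5.1]{BH}). Instead, you have written a proof sketch for \Cref{maintheorem}, the inequality $\beta_{h,h+t-1}(R/\init(I_t))\ge t$. These are different statements: one is a general formula for Betti numbers of Stanley--Reisner ideals, the other is a specific lower bound for one particular Betti number of one particular family of ideals. Your very first sentence, ``The plan is to apply Hochster's formula,'' already shows the mismatch: you are using the lemma, not establishing it.

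Setting that aside and reading your proposal as an attempt at \Cref{maintheorem}: your overall architecture---exhibit $t$ distinct subsets $U_a$ of size $h+t-1$ and show each contributes nonzero $\tilde H_{t-2}$---matches the paper's strategy via \Cref{small} and \Cref{mainlemma}. However, your plan to realize $\Delta(t,m,n)_{U_a}$ as the boundary of a single $(t-1)$-simplex will fail. Already for $t=3$, $m=4$, $n=5$, the complex $\Omega_1(3,4,5)$ has eight vertices and dimension five, with several facets of different sizes; it is nowhere near the boundary of a triangle. In general $\Omega_a(t,m,n)$ contains many diagonals of $t$-minors (not exactly one), and its homotopy type is not transparent. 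The paper instead proves $\dim_K\tilde H_{t-2}(\Omega_a(t,m,n);K)\ge 1$ by induction on $t$: it identifies the link of a suitable vertex with $\Omega_b(t-1,m-1,n-1)$, invokes the inductive hypothesis, and then uses the long exact sequence of the pair $(Y,Y\setminus\{p\})$ together with the vanishing $\tilde H_{t-3}(\Gamma;K)=0$ (obtained from the Cohen--Macaulayness of a skeleton of a simplex) to push the nonvanishing up one homological degree. Your ``boundary of a simplex'' shortcut does not survive contact with the actual combinatorics.
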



\subsection{Simplicial complex $\Omega_a(t, m,n)$}\label{subsec3} The aim of the remainder of this section is to introduce simplicial complex $\Omega_a(t, m,n)$ and show how it relates to $\beta_{h,h+t-1}(R/\init(I_t))$.
In what follows, let $V=\{1, \dots, m\}\times\{1,\dots, n\}$  and let $\le$ be the  partial order on $V$ defined as 
\begin{center}
 $(a, b)\le (c,d)$ if $a\le c$ and $b\ge d$.
\end{center}

\begin{definition}
 A subset $W$ of $V$ is said to be a \textit{chain} (in the sense of  \cite[Section 4.1]{BCRV}) if each two elements of $W$ are comparable in the poset $(V,\le)$.\\
If  $(a, b)\le (c,d)$,  a \textit{path} $\mathcal P$ in  $V$ from  $(a,b)$ to  $(c,d)$  is an unrefinable chain with minimum $(a,b)$ and maximum $(c,d)$. 
That is,
$$\mathcal P=\{(a_1,b_1),\dots, (a_s, b_s)\}\se V,$$
where $(a_1,b_1)=(a,b)$,  $(a_s, b_s)=(c,d)$, and $$(a_{i+1}, b_{i+1})-(a_{i}, b_{i})\in\{(1,0), (0,-1)\}$$ for all $1\le i\le s-1$.
\end{definition}

\begin{definition}
Given two  sequences $\mathcal S_1= p_1, \dots, p_s $ and $\mathcal S_2= q_1, \dots, q_s$ of $s$ points in $V$, a \textit{family of nonintersecting paths from $\mathcal S_1$ to  $\mathcal S_2$}  (in the sense of  \cite[Section 4.4]{BCRV}) is a set $\mathcal F\se V$ such that
$$\mathcal F={\mathcal P_1}\cup {\mathcal P_2}\cup\dots\cup{\mathcal P_s},$$
where $\mathcal P_i$ is a path  from $p_i$ to $q_i$ for each $i$ and  ${\mathcal P_i}\cap { \mathcal P_j}=\emptyset$ if $i\not=j$.
\end{definition}

\begin{remark}\label{7441}
For each $t,m,n\in\N$  such that $2\le t\le \mini\{m,n\}$ and $t<\max\{m,n\}$, denote by $\Delta(t,m,n)$ the simplicial complex defined by $\init(I_t)$.  By \cite[Proposition 4.4.1]{BCRV}, the facets of $\Delta(t,m,n)$ are  the families of nonintersecting paths  from $(1,n)$, $(2,n)$, $\dots$, $(t-1,n)$ to $(m,1)$, $(m,2)$, $\dots$, $(m,t-1)$.
In particular, if $t=2$, then the facets of $\Delta(2,m,n)$ are the maximal chains from $(1,n)$ to $(m,1)$.
\end{remark}
\begin{definition}
For each integer $0\le a\le t-1$, let  $V_a(t, m,n)$ be the subset of $V$ defined by
\begin{align*}
V_a(t, m,n)=  \{(i,i)\mid 1\le i\le a \}\cup
\{(m-i,n-i) \mid 0\le i\le t-a-2 \} \cup &\\
\{(i,j) \mid a+1\le i \le m+a+1-t, a+1\le j \le n+a+1-t \}&
\end{align*}
which can be represented in Cartesian coordinates as shown in \Cref{omegatmn}: 
\begin{figure}[H]
    \centering
 \begin{tikzpicture}[roundnode/.style={circle, draw=gray!80}, very thick, scale=0.7]
     \draw[very thin,gray!70]  (0.8,0.8) grid (9.2,8.2);
 \foreach\x in{1,...,3}{
 \foreach\y in{1,...,3}{
  \ifnum\x=\y, 
   \fill (\x,\y)circle(0.13);
   \fi
 }
 }
  \foreach\x in{4,...,7}{
 \foreach\y in{4,...,6}{
   \fill (\x,\y)circle(0.13);
 }
 }
  \foreach\x in{8}{
 \foreach\y in{7}{
   \fill (\x,\y)circle(0.13);
 }
 }
  \foreach\x in{9}{
 \foreach\y in{8}{
   \fill (\x,\y)circle(0.13);
 }
 }
 \node[roundnode]at  (3,3) {};
 \node[label=  left:{$(1,1)$}] at (1,1) {};
 \node[label=  right:{$(m,n)$}] at (9,8) {};  
\node[label=  right:{$(m,1)$}] at (9,1) {}; 
\node[label= left:{$(1,n)$}] at (1,8) {}; 
\node[label=above:{$(a,a)$}] at (3,3) {}; 
\end{tikzpicture}
\caption{The set $V_a(t, m,n)$}
    \label{omegatmn}
\end{figure}
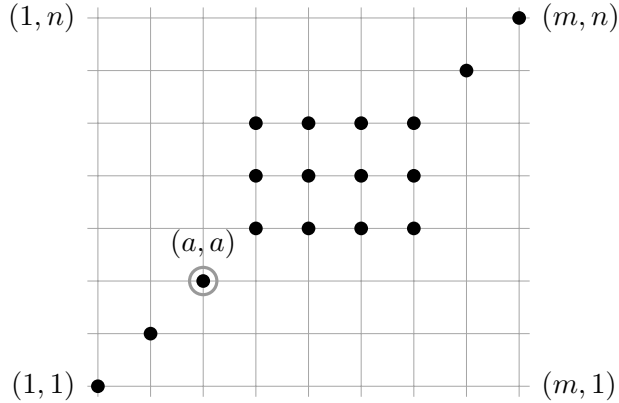
\end{definition}
\begin{definition}
For each integer $0\le a\le t-1$,
$$\Omega_a(t, m,n) =\{\sigma\in\Delta(t, m,n) \mid \sigma\se V_a(t, m,n)\}$$ is
the simplicial complex defined as the restriction of $\Delta(t, m,n)$ to the set $V_a(t, m,n)$.
\end{definition}


\begin{example}
If $t=2$, $m=n=3$, then the vertex set of  $\Omega_0(2,3,3)$ is $V_0(2,3,3)=\{(1,1), (1,2), (2,1), (2,2), (3,3)\}$:
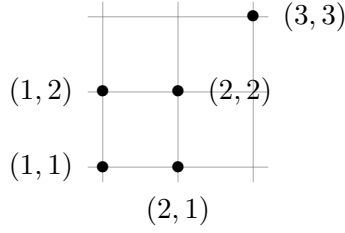
\begin{figure}[H]
    \centering
 \begin{tikzpicture}[roundnode/.style={circle, draw=gray!80}, very thick, scale=1]
     \draw[very thin,gray!70]  (0.8,0.8) grid (3.2,3.2);
 \node[label=  left:{$(1,1)$}] at (1,1){$\bullet$};
 \node[label=  right:{$(3,3)$}] at (3,3) {$\bullet$};  
\node[label= below:{$(2,1)$}] at (2,1) {$\bullet$}; 
\node[label=  right:{$(2,2)$}] at (2,2) {$\bullet$}; 
\node[label=  left:{$(1,2)$}] at (1,2) {$\bullet$}; 
\end{tikzpicture}
\caption{The set $V_0(2,3,3)$}
\end{figure}
Moreover, by \Cref{7441}, the faces of  $\Omega_0(2,3,3)$ are the restrictions to the set  $V_0(2,3,3)$ of the chains from $(1,3)$ to $(3,1)$. For instance, since $\{(1,3), (1,2),  (1,1), (2,1), 
(3,1)\}$ is a  chain  from $(1,3)$ to $(3,1)$, it follows that $\{(1,2), (1,1), 
(2,1)\}$ is a face of  $\Omega_0(2,3,3)$. Furthermore,  the chain $\{(1,2), (1,1), 
(2,1)\}$ is maximal in $V_0(2,3,3)$, and so it is a facet of $\Omega_0(2,3,3)$. Similarly,  $\{(3,3)\}$ and $\{(1,2), (2,2), (2,1)\}$ are also facets of $\Omega_0(2,3,3)$.
Hence the simplicial complex $\Omega_0(2,3,3)$ can be represented as shown in \Cref{w0233}. 
\begin{figure}[H]
    \centering
    \begin{tikzpicture}[scale=1]
\filldraw[color=black, fill=black!15, very thick](0.5,1.5)--(0,0)--(2,0)--cycle;
\filldraw[color=black, fill=black!15, very thick] (0.5,1.5)--(2.2,2)--(2,0);
\node[label=left:{$(1,1)$}] at (0,0) {$\bullet$};
\node[label=above:{$(1,2)$}] at (0.5,1.5) {$\bullet$};
\node[label=right:{$(2,1)$}] at (2,0) {$\bullet$};
\node[label=right:{$(2,2)$}] at (2.2,2) {$\bullet$};
\node[label=right:{$(3,3)$}] at (4,1) {$\bullet$};
\end{tikzpicture}
\caption{$\Omega_0(2,3,3)$}
    \label{w0233}
\end{figure}
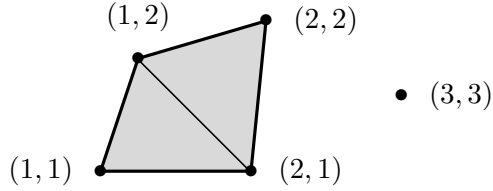
\end{example}

\begin{example}
If $t=3$, $m=4$, $n=5$ and $a=1$, then
   the vertex set of  $\Omega_1(3,4,5)$ is  $$V_1(3,4,5)=\{(1,1), (2,2),  (2,3), (2,4), (3,2), (3,3), (3,4), (4,5)\}.$$ As shown in \Cref{v1345}, the set 
\begin{align*}
\mathcal F=\{ (1,5), (1,4), (2,4), (2,3), (2,2), (2,1), (3,1), (4,1)\}\cup \{(2,5), (3,5), &\\(4,5), (4,4), (4,3), (4,2)\}&
\end{align*} is a family of nonintersecting paths from $(1,5), (2,5)$ to $(4,1), (4,2)$, and its restriction to the set $V_1(3,4,5)$  is 
$\{ (2,2), (2,3), (2,4),  (4,5)\}$. Therefore, $\{ (2,2), (2,3), (2,4),  (4,5)\}$ is a face of $\Omega_1(3,4,5)$, but it is not a facet of $\Omega_1(3,4,5)$, since the larger set $\{ (2,2), (2,3), (2,4), (3,2), (4,5)\}$ is also a face of $\Omega_1(3,4,5)$.
 \begin{figure}[H]
    \centering
     \begin{tikzpicture}[roundnode/.style={circle, draw=gray!80}, very thick, scale=0.85]
     \draw[very thin,gray!70]  (0.8,0.8) grid (4.2,5.2);
     \draw[black, very thick] (2,5)--(4,5)--(4,2);
     \draw[black, very thick] (1,5)--(1,4)--(2,4)--(2,1)--(4,1);
\node at (2,2){$\bullet$};
 \node at (2,3){$\bullet$};
 \node at (2,4){$\bullet$};
 \node at (3,2){$\bullet$};
 \node at (3,3){$\bullet$};
 \node at (3,4){$\bullet$};
\node[label=  left:{$(1,1)$}] at (1,1){$\bullet$};
 \node[label=  right:{$(4,5)$}] at (4,5) {$\bullet$};  
\node[label=  right:{$(4,1)$}] at (4,1) {}; 
\node[label=  right:{$(4,2)$}] at (4,2) {}; 
\node[label=  left:{$(1,5)$}] at (1,5) {}; 
\node[label=  above:{$(2,5)$}] at (2,5) {}; 
\end{tikzpicture}
\caption{The restriction to the set $V_1(3,4,5)$ of the family of nonintersecting paths $\mathcal F$ }
    \label{v1345}
\end{figure}
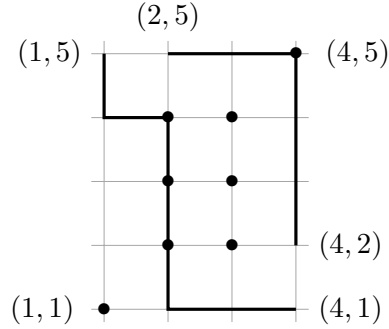
The facets of $\Omega_1(3,4,5)$ are $\{(1,1),  (2,2), (2,3), (2,4), (3,2)\}$, $\{(1,1), (2,4),\\
 (3,2), (3,3), (3,4)\}$, $\{(1,1), (2,3), (2,4), (3,2), (3,3)\}$, $\{(1,1),  (4,5)\}$, $\{(2,2), \\
 (2,3), (2,4),   (3,2),  (3,3), (3,4)\}$,  $\{ (2,2), (2,3), (2,4), (3,2), (4,5)\}$, $\{ (2,4), \\(3,2),  (3,3), (3,4), (4,5)\}$, and $\{ (2,3), (2,4), (3,2), (3,3), (4,5)\}$.

In particular, it follows from the definitions that $\vert V_1(3,4,5)\vert=8$ and $\dim \Omega_1(3,4,5)= \max\{\vert F\vert -1 \mid F \text{ is a facet of } \Omega_1(3,4,5) \}=5$.
\end{example}

\begin{lemma}\label{small}
For each $a,t,m,n$ under our assumptions,
the  cardinality of $V_a(t, m,n)$ is 
$$\vert V_a(t, m,n)\vert=h+t-1,$$ and the dimension of $\Omega_a(t, m,n)$ is 
\begin{align*}
 \dim \Omega_a(t, m,n)= \begin{cases}
			(t-1)(m+n-3t+3)-1, 
			& \text{if $l\ge t-1,$}\\
           h+t-l-2, &\text{if $l\le t-1,$}\\
		 \end{cases}
\end{align*}
where $h=(m-t+1)(n-t+1)$ is  the height of the ideal $\init(I_t)$ and $l=\mini\{m-t+1,n-t+1\}$. 
Moreover,
$$\beta_{h,h+t-1}(R/\init(I_t))\ge \sum_{0\le a\le t-1}\dim_K\Tilde{H}_{t-2}(\Omega_a(t, m,n);K).$$
\end{lemma}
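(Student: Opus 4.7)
\textit{Cardinality.} The first claim follows by direct enumeration: $V_a(t,m,n)$ is the disjoint union of the lower diagonal $\{(i,i):1\le i\le a\}$ ($a$ points), the upper diagonal $\{(m-i,n-i):0\le i\le t-a-2\}$ ($t-a-1$ points), and the inner rectangle of size $(m-t+1)(n-t+1)=h$. Disjointness is immediate from the index ranges, so $|V_a(t,m,n)|=a+(t-a-1)+h=h+t-1$.

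\textit{Dimension.} My plan is to recast the problem in poset-theoretic terms and reduce it to a classical Greene--Kleitman / Lindstr\"om--Gessel--Viennot count. Under a diagonal monomial order, $\init(I_t)$ is generated by the main diagonals of the $t$-minors of $X$, which are exactly the antichains of size $t$ in $(V,\le)$. Hence by the Stanley--Reisner correspondence $\sigma\in\Delta(t,m,n)$ if and only if $\sigma$ contains no antichain of size $t$, and $\dim\Omega_a(t,m,n)+1$ equals the maximum cardinality of a subset $F\se V_a(t,m,n)$ with no $t$-antichain. The key structural observation is that the three pieces $D_L$ (lower diagonal), $R$ (inner rectangle), $D_U$ (upper diagonal) of $V_a$ are mutually incomparable under $\le$, and that each of $D_L$ and $D_U$ is itself an antichain; these facts follow from the defining inequalities for $V_a$. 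Writing $F=F_L\sqcup F_R\sqcup F_U$, the largest antichain of $F$ therefore has size $|F_L|+|F_U|+\alpha(F_R)$, where $\alpha(F_R)$ is the antichain number of $F_R$, so the constraint becomes $|F_L|+|F_U|+\alpha(F_R)\le t-1$.

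Fixing $r=\alpha(F_R)$, I bound $|F_R|$ by the Greene--Kleitman invariant $\lambda_r$ of $R$ (the maximum size of a union of $r$ chains). A standard non-intersecting lattice path argument for the rectangular poset yields $\lambda_r=h-(m-t+1-r)(n-t+1-r)$ for $0\le r\le l$ and $\lambda_r=h$ for $r\ge l$, and since the two diagonals together contain $t-1$ pairwise incomparable elements, $|F_L|+|F_U|$ may be taken equal to $t-1-r$. A short analysis of the discrete increment of $\lambda_r+(t-1-r)$ in $r$ shows that the optimum is attained at $r=l$ when $l\le t-1$, giving $|F|=h+t-1-l$, and at $r=t-1$ when $l\ge t-1$, giving $|F|=\lambda_{t-1}$; substituting and subtracting one recovers the stated formula in each case. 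Achievability in both branches is witnessed by explicit families of parallel chains in $R$ together with appropriate diagonal points.

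\textit{Betti bound.} This part is essentially formal given the first two. Using $\beta_{h,h+t-1}(R/\init(I_t))=\beta_{h-1,h+t-1}(\init(I_t))$ and Hochster's formula (\Cref{hoch}),
\begin{equation*}
\beta_{h,h+t-1}\bigl(R/\init(I_t)\bigr)=\sum_{\substack{U\se V\\ |U|=h+t-1}}\dim_K\Tilde{H}_{t-2}\bigl(\Delta(t,m,n)_U;K\bigr).
\end{equation*}
Each $V_a(t,m,n)$ with $0\le a\le t-1$ has cardinality $h+t-1$ by the first part, and the subsets $V_0,\ldots,V_{t-1}$ are pairwise distinct because their inner rectangles shift with $a$. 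Since $\Delta(t,m,n)_{V_a(t,m,n)}=\Omega_a(t,m,n)$ by definition and every summand in the sum above is non-negative, restricting the sum to these $t$ subsets yields the claimed lower bound. The principal technical hurdle is the dimension step, specifically the LGV-type evaluation of $\lambda_r$ for the shifted rectangular poset and the case analysis in $r$; the cardinality and Betti bound are routine once this is in hand.
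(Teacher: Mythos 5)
These two parts are correct and essentially coincide with the paper's own argument. The disjointness of the three pieces of $V_a(t,m,n)$ is exactly as you check; the Betti bound is Hochster's formula together with the observation that the $V_a$ for $0\le a\le t-1$ are pairwise distinct subsets of cardinality $h+t-1$, and every summand in Hochster's formula is nonnegative. No issues here.

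\textbf{Dimension.} Your poset-theoretic argument is a genuine derivation — the paper simply asserts $\max\{\lvert F\rvert-1:F\text{ facet}\}$ equals the displayed formula without computation — and its structure (recast $\init(I_t)$ as the Stanley--Reisner ideal of antichain-free sets, exploit the mutual incomparability of the three pieces of $V_a$, apply Dilworth to bound $\lvert F_R\rvert$ by $\lambda_r(R)=pq-(p-r)(q-r)$ with $p=m-t+1$, $q=n-t+1$, then optimize over $r\le\min(l,t-1)$) is sound. However, you state that this \emph{recovers} the formula printed in the lemma, and it does not in the branch $l\ge t-1$: your computation yields
\[
\dim\Omega_a(t,m,n)=\lambda_{t-1}-1=h-\bigl(p-(t-1)\bigr)\bigl(q-(t-1)\bigr)-1=h-(m-2t+2)(n-2t+2)-1,
\]
whereas the lemma as printed has $h-(m-2t)(n-2t)-1$, and these disagree in general. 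Concretely, for $t=2$, $m=n=5$ one has $V_0(2,5,5)=\{(5,5)\}\cup\{1,\dots,4\}^2$; the longest chain in $V_0$ has $4+4-1=7$ elements (the isolated vertex $(5,5)$ is incomparable to the rectangle), so $\dim\Omega_0(2,5,5)=6$. Your formula gives $16-9-1=6$, the printed one gives $16-1-1=14$. Likewise, in the paper's own worked example $\Omega_1(3,4,5)$ (where $l=t-1=2$) the paper computes $\dim=5$, consistent with your $h-(m-2t+2)(n-2t+2)-1=6-0-1=5$ and with the $l\le t-1$ branch, but not with $h-(m-2t)(n-2t)-1=3$. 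So the printed formula appears to be a typo, and you should flag the discrepancy rather than silently assert agreement. This is harmless downstream — the dimension of $\Omega_a$ is used only to fix the ambient dimension of the geometric realization in the proof of \Cref{mainlemma}, where any finite value serves — but it is a real defect in the statement that your argument in fact exposes.
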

\begin{proof} 
Since $ \dim R/I_t= (m+n-t+1)(t-1)$ (see, for example, \cite[Theorem 3.4.6]{BCRV}),
Macaulay’s theorem implies that the height of  $\init(I_t)$ is
\begin{align*}
h&=\dim R-\dim R/\init(I_t)=\dim R-\dim R/I_t\\
& =mn-(m+n-t+1)(t-1) \\
& =(m-t+1)(n-t+1).
\end{align*}
Hence $\vert V_a(t, m,n)\vert=(m-t+1)(n-t+1)+t-1=h+t-1$. In addition,
\begin{align*}
 \dim \Omega_a(t, m,n)= &\max\{\dim \sigma\mid \sigma\in \Omega_a(t, m,n) \}\\
 =& \max\{\vert F\vert -1 \mid F \text{ is a facet of } \Omega_a(t, m,n) \}\\
 =& \begin{cases}
			h-(m-2t+2)(n-2t+2)-1, 
			& \text{if $l\ge t-1,$}\\
           h+t-1-l-1, 
           &\text{if $l\le t-1,$}\\
		 \end{cases}\\
 =& \begin{cases}
			(t-1)(m+n-3t+3)-1, 
			& \text{if $l\ge t-1,$}\\
           h+t-l-2, &\text{if $l\le t-1.$}\\
		 \end{cases}
\end{align*}
Moreover, using the result $\vert V_a(t, m,n)\vert=h+t-1$ and Hochster’s formula (\Cref{hoch}), 
\begin{align*}
\beta_{h,h+t-1}(R/\init(I_t))&=\beta_{h-1,h+t-1}(\init(I_t))\\
&=\beta_{h-1, h+t-1}(I_{\Delta(t,m,n)})\\
&=\sum_{\substack{\vert U\vert=h+t-1\\ U\se  \{1, \dots, m\}\times\{1,\dots, n\}}}\dim_K\Tilde{H}_{t-2}(\Delta(t,m,n)_U;K)\\
&\ge  \sum_{0\le a\le t-1}\dim_K\Tilde{H}_{t-2}(\Omega_a(t, m,n);K). \qedhere
\end{align*}
\end{proof}


\section{A study of the Betti number $\beta_{h, h+t-1}(R/\init(I_t))$}\label{sectionmain}
This section is devoted to the proof of \Cref{maintheorem}. For the reader’s convenience, we recall the statement of  \Cref{maintheorem} below:
\begin{theorem1}
For all $t$ such that $2\le t \le \mini\{m,n\}$ and $t< \max\{m,n\}$,  the graded Betti number $$\beta_{h, h+t-1}(R/\init(I_t))\ge t,$$ where $h=(m-t+1)(n-t+1)$ is the height of $\init(I_t)$.
\end{theorem1}
As noted in \Cref{small}, the graded Betti number $\beta_{h,h+t-1}(R/\init(I_t))$ is bounded below by $\sum_{0\le a\le t-1} \dim_K\Tilde{H}_{t-2}(\Omega_a(t, m,n);K)$.  
Therefore,  to prove \Cref{maintheorem}, we study $\dim_K\Tilde{H}_{t-2}(\Omega_a(t, m,n);K)$
 by induction on $t$. More precisely,
\Cref{rmk2mn}, \Cref{rmk1}, \Cref{caset2} and \Cref{minmn2} address the base case $t=2$.  As a consequence of \Cref{caset2} and \Cref{minmn2}, \Cref{corocase2} provides the exact value of  the Betti number $\beta_{h, h+1}(R/\init(I_2))$. In  \Cref{mainlemma} we complete the argument  by showing that $\dim_K\Tilde{H}_{t-2}(\Omega_a(t, m,n);K)\ge 1$ holds for all $t,m,n,a$, thereby proving \Cref{maintheorem}.\\

First, observe from \Cref{w0233} that $\Omega_0(2,3,3)$ has two connected components. Hence
\begin{center} 
$\dim_K\Tilde{\H}_{0}(\Omega_0(2, 3,3);K)=$ the number of connected components of $ \Omega_0(2, 3,3)-1=2-1=1$.
\end{center} 
More generally, for $t=2$, we have:
\begin{remark}\label{rmk2mn}
 For each $m,n\ge 2$ and for each $a\in \{0,1\}$,
$$\dim_K\Tilde{\H}_{0}(\Omega_a(2, m,n);K)= 1,$$
that is, $\Omega_a(2, m,n)$ has exactly two connected components.
\end{remark}

\begin{proof} 
Set 
     $f=\begin{cases}
			(m,n) , & \text{if $a=0$}\\
            (1,1) , & \text{if $a=1$}\\
		 \end{cases}$   
 and  $p=\begin{cases}
			(m-1,1), & \text{if $a=0$}\\
           (m,2), & \text{if $a=1$}\\
		 \end{cases}$   
(see \Cref{omega2mn}).
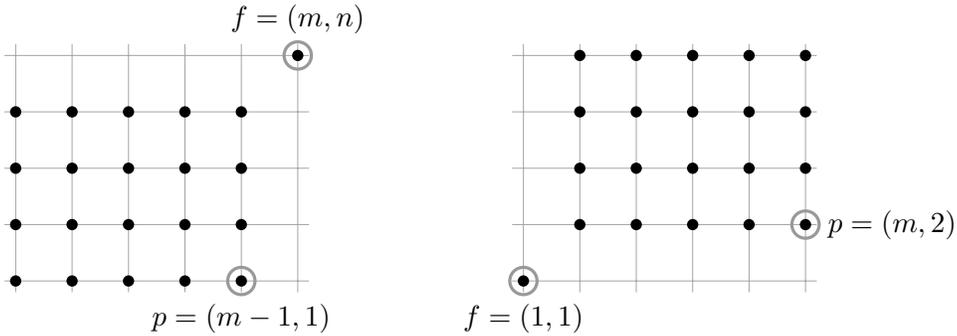
\begin{figure}[H]
    \centering
 \begin{tikzpicture}[roundnode/.style={circle, draw=gray!80}, very thick, scale=0.75] 
     \draw[very thin,gray!70]  (0.8,0.8) grid (6.2,5.2);

\draw[very thin,gray!70]  (9.8,0.8) grid (15.2,5.2);

 \foreach\x in{6}{
 \foreach\y in{5}{
   \fill (\x,\y)circle(0.1);
 }
 }
 \foreach\x in{10}{
 \foreach\y in{1}{
   \fill (\x,\y)circle(0.1);
 }
 }
  \foreach\x in{1,...,5}{
 \foreach\y in{1,...,4}{
   \fill (\x,\y)circle(0.1);
 }
 }
  \foreach\x in{11,...,15}{
 \foreach\y in{2,...,5}{
   \fill (\x,\y)circle(0.1);
 }
 }
 \node[roundnode]at  (6,5) {};
 \node[label=above:{$f=(m,n)$}] at (6,5) {};
  \node[roundnode]at  (5,1) {};
 \node[label=below:{$p=(m-1, 1)$}] at (5,1) {};
 \node[roundnode]at  (15,2) {};
 \node[label=right:{$p=(m,2)$}] at (15,2) {};
 \node[roundnode]at  (10,1) {};
 \node[label=below: {$f=(1,1)$}] at (10,1) {};

 
\end{tikzpicture}
\caption{${V_0(2, m,n)}$ (left) and ${V_1(2, m,n)}$ (right)}
    \label{omega2mn}
\end{figure}
By \Cref{7441},  $F=\{f\}$ is a facet of $\Omega_a(2, m,n)$, and $q\le p$  for each $q\in V_a(2, m,n)\setminus F$. It follows that $$p\in \bigcap_{\substack{G\text{ is a facet of } \Omega_a(2, m,n)\\ G\not =F}} G.$$
Moreover,  $\langle F\rangle\cup\big(\Omega_a(2, m,n)\setminus F\big)=\Omega_a(2, m,n)$ and
$\langle F\rangle\cap\big(\Omega_a(2, m,n)\setminus F\big)=\{\emptyset\}$, where by $\langle F\rangle$ we mean the smallest simplicial complex containing $F$, and $\Omega_a(2, m,n)\setminus F=\{\sigma\in \Omega_a(2, m,n) \mid F\not\se \sigma\}$. Therefore, $\dim_K{\H}_{0}(\langle F\rangle;K)=1$,  $\dim_K{\H}_{0}(\Omega_a(2, m,n)\setminus F;K)= 1$ and
\begin{align*}
&\dim_K\Tilde{\H}_{0}(\Omega_a(2, m,n);K)\\
=&\dim_K{\H}_{0}(\Omega_a(2, m,n);K)-1 \\
=&\dim_K{\H}_{0}(\Omega_a(2, m,n)\setminus F;K)+ \dim_K{\H}_{0}(\langle F\rangle;K)-1 \\=&1 
\end{align*}
by using Mayer–Vietoris sequence. 
\end{proof}

\begin{notation}
For each simplicial complex $\Delta$, we denote by $\V{\Delta}$ the vertex set of $\Delta$. For example, $\V{\Delta(t,m,n)}=\{1, \dots, m\}\times\{1,\dots, n\}$ and $\V{\Omega_a(t,m,n)}=V_a(t,m,n)$.
\end{notation}

\begin{remark}\label{rmk1}
Let $\Omega$ be a simplicial subcomplex of $\Delta(2,m,n)$  such that  $\dim_K\Tilde{\H}_{0}(\Omega;K)\ge 1$. Then 
\begin{enumerate}
\item[$i$)] $(1,n)\not\in \V{\Omega}$ and $(m,1)\not\in \V{\Omega}$. 
\item[$ii)$] For each $1\le j\le n$, there exists $1\le i\le m$ such that $(i,j)\not\in \V{\Omega}$; symmetrically, for each $1\le i\le m$, there exists $1\le j\le n$ such that $(i,j)\not\in \V{\Omega}$.
\end{enumerate}
If we assume furthermore that $\vert\V\Omega\vert=(m-1)(n-1)+1$, then
\begin{enumerate}
\item[$iii)$] for each $1\le j\le n$, there exists $1\le i\le m$ such that $(i,j)\in \V\Omega$; symmetrically, for each $1\le i\le m$, there exists $1\le j\le n$ such that $(i,j)\in \V\Omega$.
\end{enumerate}
\end{remark}
\begin{proof}
\begin{enumerate}
\item[$i)$] Since $(1,n)\le (i,j)\le (m,1)$ for all $(i,j)\in  \V\Omega$,
if $(1,n)\in \V{\Omega}$ (resp. $(m,1)\in \V{\Omega}$), then $(1,n)$ (resp. $(m,1)$)  belongs to all facets of $\Omega$. 
It follows that $\dim_K\Tilde{\H}_{0}(\Omega;K)=0$. \\
\item[$ii)$] If there exists $1\le l\le n$ such that  $(i,l)\in \V{\Omega}$ for all $1\le i\le m$, then  by \Cref{7441}, $G=\{(i,l)\mid 1\le i\le m\}$ is a face of $\Omega$, and for each facet $F$ of $\Omega$, there exists $1\le h\le m$ such that $(h, l)\in F$. It follows that $F\cap G\not=\emptyset$ for each facet $F$, that is, every facet $F$ is connected to $G$. Hence $\dim_K\Tilde{\H}_{0}(\Omega;K)=0$.\\
\item[$iii)$] Point $ii)$ implies that, for each $m,n \ge 2$, if $\dim_K\Tilde{\H}_{0}(\Omega;K)\ge 1$, then $mn \ge \vert\V\Omega \vert+ \max\{m,n\}.$\\
If there exists $1\le l\le n$ such that  $(i,l)\not\in \V{\Omega}$ for all $1\le i\le m$, then $\Omega$ can be considered as a simplicial subcomplex of $\Delta(2,m,n')$, where $n'=n-1$.  Hence 
\begin{align*}
m(n-1)=mn'&\ge \vert\V\Omega \vert+ n'\\
&=(m-1)(n-1)+1+n-1\\
&=m(n-1)+1,
\end{align*} a contradiction.
\end{enumerate}
\end{proof}

\begin{proposition}\label{caset2}
Assume $m,n\ge 3$. Let $\Omega$ be a simplicial subcomplex of $\Delta(2,m,n)$ such that $\vert\V\Omega\vert=(m-1)(n-1)+1$. Then $\dim_K\Tilde{\H}_{0}(\Omega;K)\ge 1$ if and only if $\Omega=\Omega_a(2, m,n)$ with $a=0,1$.
\end{proposition}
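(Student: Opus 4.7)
My plan is to analyze the vertex set $W := \V{\Omega}$ via a coordinate-projection argument. The $(\Leftarrow)$ direction follows from \Cref{rmk2mn}. For $(\Rightarrow)$, assume $\Omega$ is disconnected, so we can write $W = W_1 \sqcup W_2$ with every pair $(w_1, w_2) \in W_1 \times W_2$ incomparable in $(V, \le)$. The key observation is that two vertices sharing a row or a column of $V$ are automatically comparable in our poset, since along a fixed first or second coordinate the other coordinate totally orders the elements. Hence the first-coordinate projections $\pi_1(W_1)$ and $\pi_1(W_2)$ are disjoint subsets of $\{1,\dots,m\}$, and similarly the second-coordinate projections are disjoint subsets of $\{1,\dots,n\}$. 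Setting $x_i := |\pi_1(W_i)|$ and $y_i := |\pi_2(W_i)|$, one has $|W_i| \le x_i y_i$, $x_1 + x_2 \le m$, and $y_1 + y_2 \le n$, yielding the packing bound $|W| \le h(x_1, y_1)$ where $h(x_1, y_1) := x_1 y_1 + (m-x_1)(n-y_1)$. Since $h$ is linear in each variable separately, its maximum over $\{1,\dots,m-1\} \times \{1,\dots,n-1\}$ is at a corner, and a direct check gives $\max h = (m-1)(n-1) + 1$, achieved only at $(x_1, y_1) \in \{(1, 1), (m-1, n-1)\}$.

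These two extremal corners correspond to $|W_1| = 1$ and $|W_2| = 1$, respectively. Suppose $|W_1| = 1$ with $W_1 = \{v = (a, b)\}$. Then $v$ is isolated in the comparability graph on $W$, so $W_2$ is contained in the set of vertices incomparable to $v$, which has size $(a-1)(b-1) + (m-a)(n-b)$. Writing $u = a-1$, $u' = m-a$, $w = b-1$, $w' = n-b$, the identity
\[
uw + u'w' = (u+u')(w+w') - (uw'+u'w) = (m-1)(n-1) - (uw'+u'w)
\]
shows this count is at most $(m-1)(n-1)$, with equality iff $uw' = u'w = 0$, i.e., $v = (1, 1)$ or $v = (m, n)$. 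Since $|W_2| = (m-1)(n-1)$ forces the equality, $v$ must be one of these corners; moreover $W_2$ must equal the full set of incomparables of $v$, pinning $W$ down to $V_1(2, m, n)$ (if $v = (1,1)$) or $V_0(2, m, n)$ (if $v = (m, n)$). The case $|W_2| = 1$ is symmetric.

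The remaining case $|W_1|, |W_2| \ge 2$ will be ruled out. Here the constraints $x_1 y_1 \ge 2$ and $(m - x_1)(n - y_1) \ge 2$ exclude both maximizing corners of $h$. Using the linearity of $h$ in each variable, the maximum over the remaining admissible region reduces to a finite check at the other two corners $(1, n-1), (m-1, 1)$ and the near-corner lattice points $(1, 2), (2, 1), (m-1, n-2), (m-2, n-1)$; at each, elementary inequalities deduced from $(m-2)(n-2) \ge 0$ and $m, n \ge 3$ give $h \le (m-1)(n-1)$. Hence $|W| \le (m-1)(n-1)$, contradicting $|W| = (m-1)(n-1) + 1$. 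The main obstacle will be this last step: although elementary, it requires a careful case-by-case check at the near-corner lattice points and crucially uses the hypothesis $m, n \ge 3$.
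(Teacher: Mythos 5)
Your proposal is correct and takes a genuinely different route from the paper's proof. The published argument (Steps 1--3) parametrizes the boundary of the vertex region by integers $\h, f, l, s$ and invokes disconnectedness repeatedly to squeeze the region into a staircase, eventually forcing $(\h,l)\in\{(m,n),(2,2)\}$. You instead partition $\V{\Omega} = W_1 \sqcup W_2$ across the disconnection, use that same-row and same-column pairs are always comparable to conclude that the coordinate projections of $W_1$ and $W_2$ are disjoint, and pack: $|\V{\Omega}| \le x_1 y_1 + (m-x_1)(n-y_1)$. Maximizing this bilinear expression over the integer box forces a singleton side, and the isolated vertex $v=(a,b)$ is then pinned to $(1,1)$ or $(m,n)$ by the identity $(a-1)(b-1)+(m-a)(n-b) = (m-1)(n-1) - \big[(a-1)(n-b)+(m-a)(b-1)\big]$. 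This is shorter and, to my eye, more transparent than the paper's case-by-case geometry. Two remarks. First, the claim that the cross-pairs $W_1\times W_2$ are incomparable silently uses that $\Omega$ is the restriction $\Delta(2,m,n)_{\V{\Omega}}$ and not merely some subcomplex with that vertex set; the paper's proof carries the same implicit assumption, and given the context of \Cref{small} this is clearly the intended reading. Second, the ``careful near-corner check'' you flag as the main obstacle can be bypassed cleanly: writing $h(x,y)=xy+(m-x)(n-y)$, one has the factorization $h(x,y)-h(1,1) = -\big[(m-1-x)(y-1)+(x-1)(n-1-y)\big]$, which shows in one line that $h\le(m-1)(n-1)$ on the integer box $\{1,\dots,m-1\}\times\{1,\dots,n-1\}$ away from the two corners $(1,1)$ and $(m-1,n-1)$, the equality cases degenerating precisely when $m=2$ or $n=2$. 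This isolates the role of $m,n\ge 3$ and disposes of the case $|W_1|,|W_2|\ge 2$ immediately, with no case analysis at near-corner lattice points.
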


\begin{proof} 
\Cref{rmk2mn} shows that if $\Omega=\Omega_a(2, m,n)$ then  $\dim_K\Tilde{\H}_{0}(\Omega;K)= 1$. 
Now assume that $\dim_K\Tilde{\H}_{0}(\Omega;K)\ge 1$. The key idea in proving the other implication is  to  reduce the possible vertex region of $\Omega$ by repeatedly using this assumption. More precisely, \Cref{rmk1} $iii$) implies that $\{ 1\le i\le m \mid (i,n)\in \V{\Omega}\}\not=\emptyset$ and $\{ 1\le j\le n \mid (1,j)\in \V{\Omega}\}\not=\emptyset$. Set $$\h=\mini\{i\mid (i,n)\in \V{\Omega}\},$$
 $$f=\max\{j\mid (1,j)\in \V{\Omega}\}.$$  
It follows from \Cref{rmk1} $i$) that $2\le \h \le m$.
Applying again \Cref{rmk1} $iii$), we have $\{j\mid (i,j)\in\V{\Omega}\text{ with }  1\le i\le \h-1\}\not=\emptyset$. Set $$l=\max\{j\mid (i,j)\in \V{\Omega} \text{ with }  1\le i\le \h-1\}+1,$$ 
$$s=\mini\{i\mid (i,l-1)\in \V{\Omega}\}.$$
Therefore, $2\le l \le n$, $1\le f \le l-1$, $1\le s \le \h-1$, and $\V{\Omega}$ must be contained within the shaded region shown in \Cref{Vomega-1}.
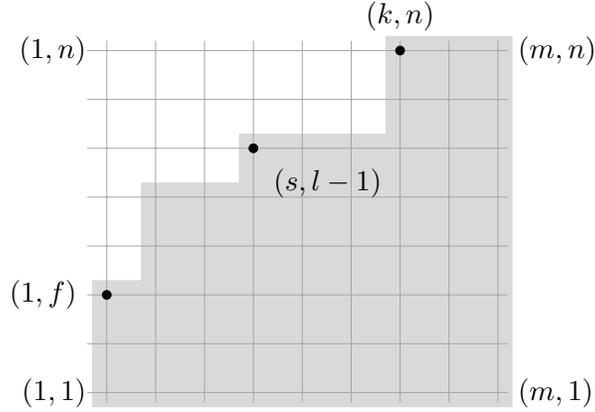
\begin{figure}[H]
    \centering
 \begin{tikzpicture}[scale=0.65]

 \fill[fill=black!15, very thick] (0.7,0.7)--(0.7,3.3)--(1.7,3.3)--(1.7, 5.3)--(3.7, 5.3) --(3.7, 6.3) --(6.7, 6.3)--(6.7, 8.3) --(9.3, 8.3)--(9.3, 3.7)--(9.3, 0.7)--cycle;
 \draw[very thin,gray!70]  (0.6,0.8) grid (9.2,8.2);
  \foreach\x in{7}{
 \foreach\y in{8}{
   \fill (\x,\y)circle(0.1);
 }
 }
 \foreach\x in{4}{
 \foreach\y in{6}{
   \fill (\x,\y)circle(0.1);
 }
 }
 \foreach\x in{1}{
 \foreach\y in{3}{
   \fill (\x,\y)circle(0.1);
 }
 }
 
\node[label= left:{$(1,1)$}] at (1,1) {};
\node[label=  right:{$(m,n)$}] at (9,8) {};  
\node[label=  right:{$(m,1)$}] at (9,1) {}; 
\node[label= left:{$(1,n)$}] at (1,8) {}; 
\node[label=above:{$(\h,n)$}] at (7,8) {}; 
\node[label=below right:{$(s,l-1)$}] at (4,6) {}; 
\node[label=left:{$(1,f)\ $}] at (1,3) {}; 
 \end{tikzpicture}

\caption{Possible vertex region of $\Omega$.}
\label{Vomega-1}
\end{figure} 

We prove that if  $\dim_K\Tilde{\H}_{0}(\Omega;K)\ge 1$ then $\Omega=\Omega_a(2, m,n)$ for some $a\in
\{0,1\}$ through the following sequence of steps:
\begin{enumerate}
\item[Step 1:] Show that $\{(i,j)\mid \h\le i\le m, 1\le j\le f\}\cap \V{\Omega}=\emptyset.$
\item[Step 2:] Show that $s=1$, $f=l-1$,  and the vertices of $\Omega$ must be distributed as shown in \Cref{Vomega00}.
\item[Step 3:] Show that $(\h,l)\in\{(m,n), (2,2)\}$.  
\end{enumerate}
\begin{figure}[H]
    \centering
 \begin{tikzpicture}[roundnode/.style={circle, draw=gray!80}, very thick, scale=0.65]

 \draw[very thin,gray!70]  (0.6,0.8) grid (9.2,8.2);
  \foreach\x in{7,8,9}{
 \foreach\y in{5,6,7,8}{
   \fill (\x,\y)circle(0.1);
 }
 }
 \foreach\x in{1,2,3,4,5,6}{
 \foreach\y in{1,2,3,4}{
   \fill (\x,\y)circle(0.1);
 }
 }

\node[label=  left:{$(1,1)$}] at (1,1) {};
\node[label=  right:{$(m,n)$}] at (9,8) {};  
\node[label=  right:{$(m,1)$}] at (9,1) {}; 
\node[label=  left:{$(1,n)$}] at (1,8) {}; 
\node[label=above:{$(\h,n)$}] at (7,8) {}; 
\node[label=left:{$(1,l-1)\ $}] at (1,4) {}; 
\node[label=below:{$(\h,l)$}] at (6.1,6) {};
\node[label=below:{$(\h-1,1)$}] at (6.2,1.22) {};
\node[roundnode]at (7,5) {};
\node[roundnode]at (6,1) {};


 \end{tikzpicture}

\caption{}
\label{Vomega00}
\end{figure} 
Note that, given the conclusion of Step 3 is true:  If $(\h, l)=(m,n)$, then $\Omega=\Omega_0(2,m,n)$; if $(\h, l)=(2,2)$, then $\Omega=\Omega_1(2,m,n)$. This completes the proof.\\
What follows are the details of each step:\\
\noindent\textbf{Step 1:} Set $A=\{(i,j)\mid 1\le i\le m, 1\le j\le f\}$ and $B=\{(i,j)\mid \h\le i\le m, 1\le j\le n\}$. We want to show $A\cap B\cap \V\Omega=\emptyset$.\\
Since  $(1,f)\le (i,j)$ for all $(i,j)\in A$ and $(\h,n)\le (i,j)$ for all $(i,j)\in B$, if  there exists $(x,y)\in A\cap B\cap \V{\Omega}$, then $(1,f)\le (x,y)$, $(\h,n)\le (x,y)$, and $(i,j)\le (x,y)$ for all $(i,j)\in \V{\Omega}\setminus(A\cup B)$. (see \Cref{Vomega0})
\begin{figure}[H]
    \centering
 \begin{tikzpicture}
 [roundnode/.style={circle, draw=gray!80}, very thick, scale=0.65]
  \fill[fill=black!15, very thick] (0.7,0.7)--(0.7,3.3)--(1.7,3.3)--(1.7, 5.3)--(3.7, 5.3) --(3.7, 6.3) --(6.7, 6.3)--(6.7, 8.3) --(9.3, 8.3)--(9.3, 3.7)--(9.3, 0.7)--cycle;
 \draw[black, very thick] (6.8,0.6) rectangle (9.2,8.3);
\draw[black, very thick] (0.8,0.8) rectangle (9.4,3.2);
 \draw[very thin,gray!70]  (0.6,0.8) grid (9.2,8.2);
  \foreach\x in{7}{
 \foreach\y in{8}{
   \fill (\x,\y)circle(0.1);
 }
 }

 \foreach\x in{1}{
 \foreach\y in{3}{
   \fill (\x,\y)circle(0.1);
 }
 }
  \foreach\x in{8}{
 \foreach\y in{2}{
   \fill (\x,\y)circle(0.1);
 }
 }
   \foreach\x in{4}{
 \foreach\y in{6}{
   \fill (\x,\y)circle(0.1);
 }
 }
\node[label= left:{$(1,1)$}] at (0.8,1) {};
\node[label= right:{$(m,n)$}] at (9,8) {};  
\node[label= right:{$(m,1)$}] at (9.2,1) {}; 
\node[label=  left:{$(1,n)$}] at (1,8) {}; 
\node[label=above:{$(\h,n)$}] at (7,8) {}; 
\node[label=above:{$\quad(x,y)\qquad$}] at (8,2) {}; 
\node[label=below right:{$(s,l-1)$}] at (4,6) {}; 
\node[label=left:{$(1,f)\ $}] at (1,3) {}; 
\node[label=below:{$A$}] at (3,0.8) {}; 
\node[label=below:{$B$}] at (9.7,6) {}; 
 \end{tikzpicture}
\caption{}
\label{Vomega0}
\end{figure}
Therefore, the connections between vertices of $\Omega$ can be represented as shown in \Cref{od1}. 
It follows that $\Omega$ has only one connected component, which contradicts  $\dim_K\Tilde{\H}_{0}(\Omega;K)\ge 1$.
\begin{figure}[H]
    \centering
 \begin{tikzpicture}
 [roundnode/.style={circle, draw=gray!80}, very thick, scale=0.65]
	\draw[draw=black] (-1.00,2.00) circle (0.1);
    \node[label=above:{$(x,y)$}] at (-1,2) {};
     \draw[draw=black] (-5.00,0.00) circle (0.1);
    \node[label=above:{$(1,f)$}] at (-5,0) {};
    \draw[draw=black] (3.00,0.00) circle (0.1);
    \node[label=above:{$(\h,n)$}] at (3,0) {};
	\draw[draw=black] (-2,0.00) circle (0.1);
    \node at (-1,0) {$\dots$};
	\draw[draw=black] (0.0,0.00) circle (0.1);
     \node[label=below:{$(i,j)\in \V{\Omega}\setminus(A\cup B)$}] at (-1,0) {};
	
    \node at (-5.00,-2.00) {$\dots$};
	\draw[draw=black] (-6.00,-2.00) circle (0.1);
	\draw[draw=black] (-4.00,-2.00) circle (0.1);
    \node[label=below:{$(i,j)\in A$}] at (-5,-2) {};
    
    \draw[draw=black] (2.00,-2.00) circle (0.1);
	\draw[draw=black] (4.00,-2.00) circle (0.1);
     \node at (3.00,-2.00) {$\dots$};
      \node[label=below:{$(i,j)\in B$}] at (3,-2) {};

	\draw[draw=black] (-1.00,2.00) -- (-5.00,0.00);
	\draw[draw=black] (-1.00,2.00) -- (-2,0.00);
	\draw[draw=black] (-1.00,2.00) -- (3.00,0.00);
	\draw[draw=black] (-1.00,2.00) -- (0,0.00);
    
	\draw[draw=black] (-5.00,0.00) -- (-6.00,-2.00);
	\draw[draw=black] (-5.00,0.00) -- (-4.00,-2.00);
    
	\draw[draw=black] (3.00,0.00) -- (2.00,-2.00);
	\draw[draw=black] (3.00,0.00) -- (4.00,-2.00);
\end{tikzpicture}
\caption{}
\label{od1}
\end{figure}
We conclude this step by updating the figure of the possible vertex region  of $\Omega$ as the follows:
\begin{figure}[H]
    \centering
 \begin{tikzpicture}[scale=0.65]

 \fill[fill=black!15, very thick] (0.7,0.7)--(0.7,3.3)--(1.7,3.3)--(1.7, 5.3)--(3.7, 5.3) --(3.7, 6.3) --(6.7, 6.3)--(6.7, 8.3) --(9.3, 8.3)--(9.3, 3.7)--(6.3, 3.7)--(6.3, 0.7)--cycle;
 \draw[very thin,gray!70]  (0.6,0.8) grid (9.2,8.2);
  \foreach\x in{7}{
 \foreach\y in{8}{
   \fill (\x,\y)circle(0.1);
 }
 }
 \foreach\x in{4}{
 \foreach\y in{6}{
   \fill (\x,\y)circle(0.1);
 }
 }
 \foreach\x in{1}{
 \foreach\y in{3}{
   \fill (\x,\y)circle(0.1);
 }
 }
 
\node[label=  left:{$(1,1)$}] at (1,1) {};
\node[label=  right:{$(m,n)$}] at (9,8) {};  
\node[label=  right:{$(m,1)$}] at (9,1) {}; 
\node[label=  left:{$(1,n)$}] at (1,8) {}; 
\node[label=above:{$(\h,n)$}] at (7,8) {}; 
\node[label=below right:{$(s,l-1)$}] at (4,6) {}; 
\node[label=left:{$(1,f)\ $}] at (1,3) {}; 
\end{tikzpicture}

\caption{}
\label{Vomega1}
\end{figure} 
  Set $V=\V{\Delta(2,m,n)}=\{1, \dots, m\}\times\{1,\dots, n\}$.  The shadow region in \Cref{Vomega1} can be written as $U=V\setminus (W_1\cup W_2\cup W_3\cup W_4)$, where 
$$W_1=\{(i,j)\mid 1\le i\le \h-1, l\le j\le n\},$$
$$W_2= \{(i,l-1)\mid 1\le i\le s-1\},$$
$$W_3=  \{(1,j)\mid f+1\le j\le l-2\},$$
$$W_4= A\cap B=\{(i,j)\mid \h\le i\le m, 1\le j\le f\}.$$
In particular, note that $\vert W_3\vert=l-2-f$ if and only if $ f\le l-2$.\\

\noindent\textbf{Step 2:} Our goal of this step is to prove $(s, l-1)=(1,f)$. Assume that $s=\mini\{i\mid (i,l-1)\in \V{\Omega}\}\ge 2$. It follows that $f\le l-2$. Therefore,   
$$\vert W_1\vert+\vert W_2\vert=(\h-1)(n-l+1)+(s-1)=(\h-1)(n-l)+\h+s-2,$$
and
$$\vert W_3\vert+\vert W_4\vert=(l-2-f)+f(m-\h+1)= f(m-\h)+l-2.$$
Since  $\V\Omega\se U$ and $W_i\cap W_j=\emptyset$ for each $i\not=j$,
\begin{align*}\label{areaU}
\vert V\vert-\vert\V\Omega\vert-\sum_{i=1}^4\vert W_i\vert
=&\vert V\vert-\vert\cup_{i=1}^4 W_i\vert-\vert\V\Omega\vert\\
=&\vert U\vert-\vert\V\Omega\vert\ge0,
\end{align*}
that is,
\begin{align*}
&mn-\Big((m-1)(n-1)+1\Big)-\sum_{i=1}^4\vert W_i\vert\\
=&m+n-2-(\h-1)(n-l)-\h-s+2- f(m-\h)-l+2\\
=&m-\h+n-l+(1-\h)(n-l)-s- f(m-\h)+2\\
=&(2-\h)(n-l)+(1-f)(m-\h)+(2-s)\ge 0.
\end{align*}
On the other hand,  our assumption $2\le s<\h\le m$ and the facts $f\ge 1$,  $n\ge l$ imply that $$(2-\h)(n-l)+(1-f)(m-\h)+(2-s)\le 0.$$
It follows that $\V{\Omega}=U$, $s=2$, $n=l$ and $i)$  $f=1$ or $ii)$ $m=\h$.
For both cases,  set $C=\{(i,j)\mid 1\le i\le \h-1, 1\le j\le n-1\}$ and $D=\{(i,j)\mid 2\le i\le m, f+1\le j\le n\}$ 
(see \Cref{Vomega3} for the case $f=1$).   Step 1 implies that $U\se C\cup D$, that is, $U=(U\cap C)\cup (U\cap D)$.
\begin{figure}[H]
    \centering
\begin{tikzpicture}
[roundnode/.style={circle, draw=gray!80}, very thick, scale=0.7] 
  \draw[ very thick] (1.7,1.6) rectangle (9.3,8.3);
  \draw[very thick] (0.7,0.7) rectangle (6.4,7.3);
  \node[roundnode]at (1,1) {};
 \node[roundnode]at (6,1) {};
 \node[roundnode]at (9,2) {};
 \node[roundnode]at (2,7) {};
 \node[roundnode]at (7,8) {};
 \draw[very thin,gray]  (0.6,0.8) grid (9.2,8.2);
  \foreach\x in{7,...,9}{
 \foreach\y in{2,...,8}{
   \fill (\x,\y)circle(0.1);
 }
 }
 \foreach\x in{2,...,6}{
 \foreach\y in{2,...,7}{
   \fill (\x,\y)circle(0.1);
 }
 }
  \foreach\x in{1,...,6}{
 \foreach\y in{1}{
   \fill (\x,\y)circle(0.1);
 }
 }
 
\node[label=below left:{$(1,1)=(1,f)$}] at (1,1) {};
\node[label=below:{$\quad (\h-1,1)$}] at (6,1) {};
\node[label=right:{$(m,f+1)$}] at (9,2) {};
\node[label=above:{$(\h,n)$}] at (7,8) {}; 
\node[label=above left:{$(s,l-1)=(2,n-1)$}] at (2,7) {}; 
\node[label=below:{$C$}] at (0.3,4) {}; 
\node[label=below:{$D$}] at (9.8,6) {}; 

 \end{tikzpicture}

\caption{$s=2$, $n=l$ and $f=1$}
\label{Vomega3}
\end{figure}
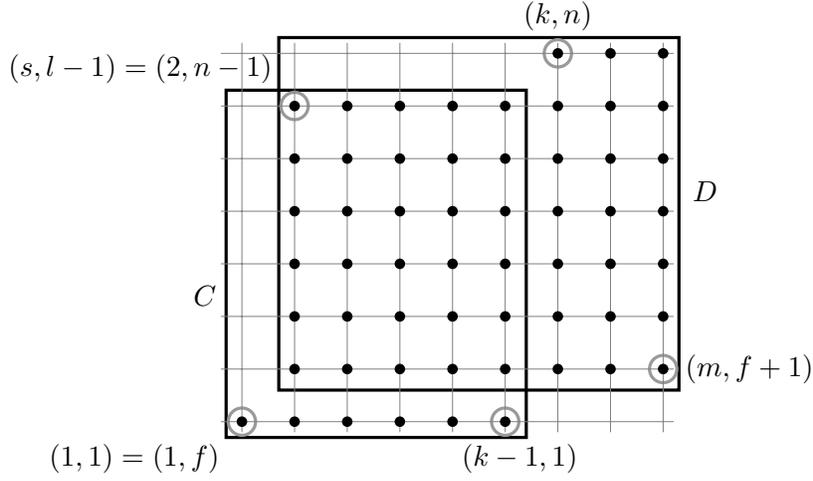 
\noindent In both cases,
 $(2,n-1)\in C\cap D\cap U$
holds by applying  the assumption $m,n\ge 3$ and using  $f\le l-2\le n-2$, $2=s\le \h-1<m$ again.
Moreover, $(i,j)\le(\h-1,1)$ for all $(i,j)\in C\cap U $ and $(i,j)\le (m,f+1)$ for all $(i,j)\in D\cap U$. In particular, $(2,n-1)\le (\h-1,1)$ and $(2,n-1)\le (m,f+1)$. 
It follows that,
as shown in \Cref{od2}, $\Omega$
has only one connected component. This leads to a contradiction.
\begin{figure}[H]
    \centering
 \begin{tikzpicture}
 [roundnode/.style={circle, draw=gray!80}, very thick, scale=0.6]
	\draw[draw=black] (-1.00,2.00) circle (0.1);
    \node[label=above:{$(2, n-1)$}] at (-1,2) {};
     \draw[draw=black] (-5.00,0.00) circle (0.1);
    \node[label=above:{$(\h-1,1)$}] at (-5.2,0) {};
    \draw[draw=black] (3.00,0.00) circle (0.1);
    \node[label=above:{$(m,f+1)$}] at (3.3,0) {};
	
    \node at (-5.00,-2.00) {$\dots$};
	\draw[draw=black] (-6.00,-2.00) circle (0.1);
	\draw[draw=black] (-4.00,-2.00) circle (0.1);
    \node[label=below:{$(i,j)\in C\cap U$}] at (-5,-2) {};
    
    \draw[draw=black] (2.00,-2.00) circle (0.1);
	\draw[draw=black] (4.00,-2.00) circle (0.1);
     \node at (3.00,-2.00) {$\dots$};
      \node[label=below:{$(i,j)\in D\cap U$}] at (3,-2) {};

	\draw[draw=black] (-1.00,2.00) -- (-5.00,0.00);
	\draw[draw=black] (-1.00,2.00) -- (3.00,0.00);
    
	\draw[draw=black] (-5.00,0.00) -- (-6.00,-2.00);
	\draw[draw=black] (-5.00,0.00) -- (-4.00,-2.00);
    
	\draw[draw=black] (3.00,0.00) -- (2.00,-2.00);
	\draw[draw=black] (3.00,0.00) -- (4.00,-2.00);

\end{tikzpicture}
\caption{}
\label{od2}
\end{figure} 
\noindent Therefore, $s=1$ and so $(1, l-1)\in \V{\Omega}$. Moreover, $f=\max\{j\mid (1,j)\in \V{\Omega}\}\le l-1$ implies $f=l-1$, that is, $(s, l-1)=(1,f)$. Thus,
$$W_2= \{(i,l-1)\mid 1\le i\le s-1\}=\emptyset,$$
$$W_3=  \{(1,j)\mid f+1\le j\le l-2\}=\emptyset,$$
and 
the shadow region in \Cref{Vomega1} is  
\begin{align*}
    U=V\setminus(W_1\cup W_4)= \{(i,j)\mid i\le \h-1, j\le l-1\}\cup \{(i,j)\mid i\ge \h, j\ge l\},
\end{align*} which is the set of points shown in \Cref{Vomega00}.

\noindent\textbf{Step 3:} In this step we calculate $\h$ and $l$. By Step 2,
$$\vert U\vert=(\h-1)(l-1)+(m-\h+1)(n-l+1).$$
Now apply again the fact $\V\Omega\se U$:
\begin{align*}
0&\le (\h-1)(l-1)+(m-\h+1)(n-l+1)-(m-1)(n-1)- 1\\
&=mn-m(l-1)-n(\h-1)+2(\h-1)(l-1)-mn+m+n-2\\
&=m(2-l)+n(2-\h)+\underbrace{2(\h-1)(l-1)-2}_{=(\h-2)l+\h(l-2)}\\
&=(2-\h)(n-l)+(2-l)(m-\h).
\end{align*}
By applying again $m,n\ge 3$, $2\le \h\le m$ and $2\le l\le n$, we conclude that $U=\V{\Omega}$ 
and $(\h,l)\in\{(m,n), (2,2)\}$. 
\end{proof}

\begin{remark}\label{minmn2}
If $\mini\{m,n\}=2$, then
$$\sum_{\substack{\vert U\vert=(m-1)(n-1)+1\\ U\se \{1, \dots, m\}\times\{1,\dots, n\}}} \dim_K\Tilde{H}_{0}(\Delta(2,m,n)_U;K)=\max\{m,n\}-1.$$
\end{remark}
\begin{proof}
Note that
 if $\mini\{m,n\}=2$, then the result of Step 2 in \Cref{caset2} is trivial.  
In particular, assume $n=2<m$. For each $1\le b\le m-1$, consider the simplicial complex $\Omega_b$ defined as the restriction of   $\Delta(2,m,2)$  to the set
$$\{(1,1), \dots, (b,1), (b+1,2),\dots, (m,2)\}.$$ Since $\Omega_b$ has only two facets $F_1=\{(1,1), \dots, (b,1)\}$ and $F_2=\{ (b+1,2),\dots, (m,2)\}$, and since  $F_1\cap F_2=\emptyset$, $\Omega_b$ has exactly two connected components, that is, $\dim_K\Tilde{\H}_{0}(\Omega_b;K)= 1.$

If $\Omega$ is a simplicial subcomplex of $\Delta(2,m,2)$ such that $\vert\V\Omega\vert=(m-1)(n-1)+1=m$, by a discussion analogue to Step 1 and  Step 3 in \Cref{caset2},
$\dim_K\Tilde{\H}_{0}(\Omega;K)\ge 1$ if and only if
$\Omega=\Omega_b$  with $1\le b\le m-1$, if and only if $\dim_K\Tilde{\H}_{0}(\Omega;K)= 1$.

The case $m=2$ is symmetric to the case $n=2$: If $\Omega$ is a simplicial subcomplex of $\Delta(2,2,n)$ such that $\vert\V\Omega\vert=n$, then  $\dim_K\Tilde{\H}_{0}(\Omega;K)\ge 1$ if and only if $\Omega=\Omega_b$ is the simplicial complex defined as the restriction of   $\Delta(2,2,n)$  to the set
$\{(1,1), \dots, (1,b), (2, b+1),\dots, (2,n)\}$
with $1\le b\le n-1$, if and only if $\dim_K\Tilde{\H}_{0}(\Omega;K)= 1$.
Therefore,  by Hochster's formula (\Cref{hoch}),
\begin{align*}
&\sum_{\substack{\vert U\vert=(m-1)(n-1)+1\\ U\se \{1, \dots, m\}\times\{1,\dots, n\}}} \dim_K\Tilde{H}_{0}(\Delta(2,m,n)_U;K)\\
&=\sum_{1\le b\le \max\{m,n\}-1}\dim_K\Tilde{H}_{0}(\Omega_b;K)\\
&=\max\{m,n\}-1. \qedhere
\end{align*}
\end{proof}

\begin{corollary}\label{corocase2}
For $m,n$ such that $2\le \mini\{m,n\}$ and $2<\max\{m,n\}$, 
\begin{equation*}
\beta_{h, h+1}(R/\init(I_2))=
\begin{cases}
2, &\text{ if } m,n\ge3,\\
\max\{m,n\}-1 , &\text{ if } \mini\{m,n\}=2,
 \end{cases}
\end{equation*}
where  $h$ is the height of  $\init(I_2)$.
\end{corollary}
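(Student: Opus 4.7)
The plan is to deduce this corollary directly from Hochster's formula together with the structural results already established, namely Proposition~\ref{caset2}, Remark~\ref{rmk2mn}, and Remark~\ref{minmn2}. Since $\init(I_2) = I_{\Delta(2,m,n)}$ is a squarefree monomial ideal, I would start by invoking Hochster's formula (Lemma~\ref{hoch}) to write
$$\beta_{h,h+1}(R/\init(I_2)) = \beta_{h-1,h+1}(I_{\Delta(2,m,n)}) = \sum_{\substack{|U| = h+1 \\ U \subseteq V}} \dim_K \tilde H_0(\Delta(2,m,n)_U; K),$$
where $V = \{1,\dots,m\}\times\{1,\dots,n\}$ and $h = (m-1)(n-1)$. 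This reduces the computation of the Betti number to a combinatorial count over restrictions of the simplicial complex to vertex subsets of cardinality $(m-1)(n-1)+1$.

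For the case $m,n \ge 3$, I would apply Proposition~\ref{caset2}, which characterizes exactly which such restrictions contribute: the only subsets $U \subseteq V$ with $|U| = (m-1)(n-1)+1$ for which $\tilde H_0(\Delta(2,m,n)_U;K) \neq 0$ are $U = V_0(2,m,n)$ and $U = V_1(2,m,n)$. By Remark~\ref{rmk2mn}, each of these contributes exactly $1$ to the sum, giving a total of $2$, as claimed.

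For the case $\min\{m,n\} = 2$, the answer is delivered directly by Remark~\ref{minmn2}, which evaluates the same sum to $\max\{m,n\}-1$. In both cases the identification of $\beta_{h,h+1}(R/\init(I_2))$ with this sum via Hochster's formula completes the argument.

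I do not anticipate any genuine obstacle here, since the two substantial classification results (Proposition~\ref{caset2} for the $m,n \ge 3$ range and Remark~\ref{minmn2} for the degenerate $\min\{m,n\}=2$ case) have already done the heavy lifting; the remaining task is the straightforward bookkeeping of assembling them under Hochster's formula.
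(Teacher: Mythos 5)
Your proposal is correct and follows essentially the same route as the paper: apply Hochster's formula to convert $\beta_{h,h+1}(R/\init(I_2))$ into a sum of dimensions of reduced zeroth homologies over subsets of size $h+1$, then invoke Proposition~\ref{caset2} and Remark~\ref{rmk2mn} for the case $m,n\ge 3$ and Remark~\ref{minmn2} for the case $\min\{m,n\}=2$.
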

\begin{proof}
As discussed in the proof of \Cref{small}, for any $2\le t\le\mini\{m,n\}$, the height of the ideal $\init(I_t)$ is
$(m-t+1)(n-t+1).$
In particular, if $t=2$, then $h=(m-1)(n-1).$
Therefore, 
\begin{align*}
   \beta_{h, h+1}(R/\init(I_2)) &=\beta_{h-1, h+1}(I_{\Delta(2,m,n)})\\
   &=\sum_{\substack{U=h+1\\ U\se  \{1, \dots, m\}\times\{1,\dots, n\}}}\dim_K\Tilde{H}_{0}(\Delta(2,m,n)_U;K)\\
   &= \begin{dcases}
			\sum_{a\in\{0, 1\}}\dim_K\Tilde{H}_{0}(\Omega_a(2,m,n);K), & \text{if $m,n\ge 3$,}\\
       \max\{m,n\}-1, & \text{if $\mini\{m,n\}=2$,}
		 \end{dcases}\\
&=\begin{cases}
2, &\text{ if } m,n\ge3,\\
\max\{m,n\}-1 , &\text{ if } \mini\{m,n\}=2,
 \end{cases}
\end{align*}
 by reusing Hochster's formula (\Cref{hoch}) and applying \Cref{caset2}, \Cref{rmk2mn} and \Cref{minmn2}.\end{proof}

 \Cref{maintheorem} follows directly from \Cref{small} together with the following result:
 
\begin{lemma}\label{mainlemma}
For each $0\le a\le t-1$, $$\dim_K\Tilde{\H}_{t-2}(\Omega_a(t,m,n);K)\ge 1.$$ 
\end{lemma}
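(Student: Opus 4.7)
The strategy is to decompose $\Omega := \Omega_a(t,m,n)$ via a reduced Mayer--Vietoris argument centered on a distinguished \emph{antichain facet} $F_0 := A_1 \cup A_2$, where $A_1 := \{(i,i) : 1 \le i \le a\}$ and $A_2 := \{(m-i, n-i) : 0 \le i \le t-a-2\}$. Direct inspection of the poset $(V,\le)$ shows that $F_0$ is an antichain of size $t-1$ and that each element of $F_0$ is incomparable to every element of the rectangle $\mathcal{R} := \{(i,j) : a+1 \le i \le m+a+1-t,\ a+1 \le j \le n+a+1-t\}$ making up the rest of $V_a(t,m,n)$. Combining the description of the facets of $\Delta(t,m,n)$ in \Cref{7441} with Mirsky's theorem, the faces of $\Omega$ are precisely the subsets of $V_a(t,m,n)$ containing no antichain of size $t$. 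In particular, $F_0$ is a $(t-2)$-dimensional facet of $\Omega$, and in fact the \emph{unique} facet of $\Omega$ disjoint from $\mathcal{R}$: any such facet lies in $A_1 \cup A_2$, and maximality forces it to be all of $F_0$.

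Next I would single out the minimum $v_{\min} := (a+1,\, n+a+1-t)$ and maximum $v_{\max} := (m+a+1-t,\, a+1)$ of $\mathcal{R}$ in the poset. The key observation is that $v_{\min}$ (respectively $v_{\max}$) is comparable to every element of $\mathcal{R}$, which yields the crucial claim: \emph{every facet $F$ of $\Omega$ with $F \cap \mathcal{R} \ne \emptyset$ contains both $v_{\min}$ and $v_{\max}$}. Indeed, writing $s_1 = |F \cap A_1|$, $s_2 = |F \cap A_2|$, and letting $s_3 \ge 1$ denote the maximum antichain size in $F \cap \mathcal{R}$, the face condition gives $s_1+s_2+s_3 \le t-1$, hence $1+s_1+s_2 \le t-1$, so adjoining $v_{\min}$ to $F$ cannot create a $t$-antichain; maximality of $F$ then forces $v_{\min} \in F$, and $v_{\max}$ is symmetric.

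With these ingredients in hand, I would form the decomposition $\Omega = \langle F_0 \rangle \cup \Omega'$, where $\Omega' := \Omega \setminus \{F_0\}$ (still a simplicial complex, since $F_0$ is maximal). Every proper subface of $F_0$ extends to a face of $\Omega$ by adjoining any $r \in \mathcal{R}$ (the union is again an antichain of size $\le t-1$), so $\langle F_0 \rangle \cap \Omega' = \partial F_0$, the boundary of a $(t-2)$-simplex. By the claim above, every facet of $\Omega'$ contains $\{v_{\min}, v_{\max}\}$, so $\Omega'$ coincides with the closed star of $\{v_{\min}, v_{\max}\}$ in $\Omega$, which is a simplicial cone and hence contractible. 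Reduced Mayer--Vietoris then yields
\[ \tilde{H}_{t-2}(\Omega; K) \;\cong\; \tilde{H}_{t-3}(\partial F_0; K) \;\cong\; \tilde{H}_{t-3}(S^{t-3}; K) \;\cong\; K, \]
giving the desired lower bound. The main obstacle is establishing the claim that $v_{\min}$ and $v_{\max}$ lie in every non-$F_0$ facet; this is the combinatorial heart of the argument, relying on the mutual incomparability of the three blocks $A_1$, $A_2$, $\mathcal{R}$. Once the claim is in place, the identification $\tilde{H}_{t-2}(\Omega; K) \cong \tilde{H}_{t-3}(S^{t-3}; K)$ is direct. The base case $t=2$ is already handled by \Cref{rmk2mn}, so only $t \ge 3$ (where $\partial F_0$ is a genuine $(t-3)$-sphere) needs to be treated by this argument.
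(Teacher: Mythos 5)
Your argument is correct but takes a genuinely different route from the paper's. The paper proceeds by induction on $t$: it picks a singleton face $F \in \{\{(1,1)\},\{(m,n)\}\}$, identifies $\lk_\Omega(F) \iso \Omega_b(t-1,m-1,n-1)$, and transfers the inductive lower bound on $\Tilde{\H}_{t-3}(\lk_\Omega(F))$ to $\Tilde{\H}_{t-2}(\Omega)$ through the long exact sequence of the pair $(Y,Y\setminus\{p\})$ for a geometric realization $Y$; the needed surjectivity of the connecting map is secured by proving $\Tilde{\H}_{t-3}(\Omega\setminus F)=0$, which in turn invokes shellability of skeleta and Reisner's criterion. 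Your proof is non-inductive and purely combinatorial: you isolate the antichain facet $F_0=A_1\cup A_2$, show that $v_{\min}$ and $v_{\max}$ are comparable to every element of $\mathcal R$ and incomparable to everything in $A_1\cup A_2$, deduce from this that every facet other than $F_0$ contains $\{v_{\min},v_{\max}\}$ (so $\Omega'=\Omega\setminus\{F_0\}$ is a cone, hence contractible), and then read off $\Tilde{\H}_{t-2}(\Omega)\iso\Tilde{\H}_{t-3}(\partial F_0)\iso K$ from the Mayer--Vietoris sequence of $\Omega=\langle F_0\rangle\cup\Omega'$. This is more elementary — no geometric realization, no shellability, no Reisner — and strictly stronger, since it establishes the equality $\dim_K\Tilde{\H}_{t-2}(\Omega_a(t,m,n);K)=1$ rather than only the lower bound; what it does not expose is the recursive relation $\lk_{\Omega_a(t,m,n)}(F)\iso\Omega_b(t-1,m-1,n-1)$ that underlies the paper's inductive structure. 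One small remark: the characterization of the faces of $\Delta(t,m,n)$ as the subsets free of size-$t$ antichains is most directly a consequence of the fact that the minimal generators of $\init(I_t)$ (the main diagonals of $t$-minors) are exactly the size-$t$ antichain monomials in the poset $(V,\le)$; invoking Mirsky's theorem is an unnecessary detour, and in any case the relevant dual statement (from antichain bound to chain decomposition) is Dilworth's rather than Mirsky's.
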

\begin{proof}
We prove the statement by induction on $t\ge 2$. 
 \Cref{rmk2mn} shows that
$\dim_K\Tilde{\H}_{0}(\Omega_a(2,m,n);K)=1.$
Now assume that $t\ge 3$. For each $0\le a\le t-1$, set
\begin{align*}
    F=\begin{cases}
			\{(1,1)\}, & \text{if $a=t-1$,}\\
            \{(m,n)\}, & \text{if $0\le a\le t-2$.}
		 \end{cases}
\end{align*}
By definition, the link of $F$ in $\Omega=\Omega_a(t, m,n)$ is 
\begin{align*}
\lk_\Omega(F)&=\{G\mid F\cup G\in\Omega, F\cap G=\emptyset\}\\
&\iso \begin{cases}
			 \Omega_{t-2}(t-1, m-1,n-1), & \text{if $a=t-1$,}\\
            \Omega_a(t-1, m-1,n-1), & \text{if $0\le a\le t-2$}.
		 \end{cases}
\end{align*}
Set 
$
b=
\begin{cases}
			{t-2}, & \text{if $a=t-1$,}\\
            a, & \text{if $0\le a\le t-2$}.
		 \end{cases}
$
By inductive hypothesis,
\begin{align*}
\dim_K\Tilde{\H}_{t-3}(\lk_\Omega(F);K) = \dim_K\Tilde{\H}_{(t-1)-2}(\Omega_b(t-1, m-1,n-1);K)\ge 1.
\end{align*}

Let $Y$ be a {\it geometric realization} (see, for example, \cite[Definition 5.2.8]{BH}) of the simplicial complex $\Omega=\Omega_a(t, m,n)$ given by $$\rho: V_a(t, m,n)\la \R^{d+1}$$
with $d=\dim \Omega$ as determined in \Cref{small},  and let $p\in\relint(\conv(\rho(F)))$ be an element of the relative interior of the convex hull of $\rho(F)$.  Therefore, $\Omega$ is a triangulation of the topological space $Y$, and a classical result in topology showed that the reduced singular homology of a topological space is isomorphic to the reduced simplicial homology of any of its triangulations (see \cite[Theorem 34.3]{Mun}, see also \cite[Theorem 5.3.2]{BH}). That is, 
\begin{align*}
\Tilde{\H}_{j}(Y; K)\iso  \Tilde{\H}_{j}(\Omega; K)
\end{align*}
and 
$$\Tilde{\H}_{j}(Y\setminus\{p\}; K)\iso  \Tilde{\H}_{j}(\Gamma; K)$$
for all $j$, where $\Gamma=\Omega\setminus F=\{\sigma\in \Omega \mid F\not\se \sigma\}$. 
Moreover, denote by $\H_{j}(Y, Y\setminus\{p\}; K)$ the $j$-th relative singular homology of the pair $(Y, Y\setminus\{p\})$. It follows directly from \cite[Lemma 5.4.5]{BH} that 
\begin{align*}
\H_{j}(Y, Y\setminus\{p\}; K)= \Tilde{\H}_{j-1}(\lk_\Omega(F);K)
\end{align*} for all $j$.
Furthermore, the following sequence
$$H_{t-2}(Y; K) \stackrel{\psi}\la \H_{t-2}(Y, Y\setminus\{p\}; K)\la H_{t-3}(Y\setminus\{p\}; K)\la H_{t-3}(Y; K)$$
is exact.
Note that, since $t\ge 3$,   \Cref{7441}  implies that 
${\H}_{0}(\Omega;K)\iso K.$ Hence ${\H}_{0}(Y;K)\iso K$.
If $\Tilde{\H}_{t-3}(\Gamma; K)=0$, then 
\begin{align*}
H_{t-3}(Y\setminus\{p\}; K)&\iso {\H}_{t-3}(\Gamma; K)\iso\begin{cases}
			 0, & \text{if $t\ge 4$,}\\
          K\iso  {\H}_{t-3}(Y;K), & \text{if $t=3,$}
		 \end{cases}
\end{align*}
and so the natural map $\psi$ is surjective, from which it follows that
\begin{align*}
\dim_K \Tilde{\H}_{t-2}(\Omega; K) =&\dim_K {\H}_{t-2}(Y; K) \\
\ge& \dim_K \H_{t-2}(Y, Y\setminus\{p\}; K)\\
=&\dim_K \Tilde{\H}_{t-3}(\lk_\Omega(F);K)\\
 \ge &1.
\end{align*}
Therefore, it only remains to show $\Tilde{\H}_{t-3}(\Gamma; K)=0$. 

Denote by $\Gamma_{\le t-2}$ the $(t-2)$-skeleton of $\Gamma$, that is,
\begin{align*}
\Gamma_{\le t-2}=\{\sigma\in \Gamma\mid \dim\sigma\le t-2\}=\{\sigma\in \Gamma\mid \vert\sigma\vert\le t-1\}.
\end{align*}
\Cref{7441} implies that, for each $t-1$ vertices $v_1, \dots, v_{t-1}$ of $\Gamma$, the set $\{v_1, \dots, v_{t-1}\} $ is a face of $\Gamma$, which in turn implies that $\{v_1, \dots, v_{t-1}\}$ is a face of $ \Gamma_{\le t-2}$. Thus, $\Gamma_{\le t-2}$ can be considered as the $(t-2)$-skeleton of a simplex. 
We show that $\Gamma_{\le t-2}$ is Cohen–Macaulay through the following chain of implications: According to  \cite[Theorem 5.2.14]{BH}, every polytope is shellable; in particular, each simplex is shellable. \cite[Theorem 2.9]{BW} proved that the skeleta of shellable simplicial complexes remain shellable. Hence, $\Gamma_{\le t-2}$ is shellable. By \cite[Theorem 5.1.13]{BH}, shellable simplicial complexes are Cohen–Macaulay, and so $\Gamma_{\le t-2}$ is Cohen–Macaulay. 

A result of Reisner \cite[Theorem 1]{Rei} showed that a simplicial complex $\Sigma$ is Cohen–Macaulay if and only if  $\Tilde{\H}_{j}(\lk_\Sigma (\sigma); K)=0$ for each face $\sigma$ in $\Sigma$ and for each $j< \dim\lk_\Sigma (\sigma)$. Therefore, $\Tilde{\H}_{j}(\lk_{\Gamma_{\le t-2}} (\sigma); K)=0$ for each $\sigma\in\Gamma_{\le t-2}$ and for each $j< \dim\lk_{\Gamma_{\le t-2}} (\sigma)$. In particular, $$\Tilde{\H}_{j}(\Gamma_{\le t-2}; K)=\Tilde{\H}_{j}(\lk_{\Gamma_{\le t-2}} (\emptyset); K)=0$$ for each $j<\dim \Gamma_{\le t-2}=t-2.$ 
Since  the module in homological degree $i$ of the \textit{augmented oriented chain complex} (see, for example,  \cite[Section 5.3]{BH}) of $\Gamma$ is
$$C_{i}(\Gamma)=\sum_{\substack{\sigma\in \Gamma\\ \dim\sigma\le i}}\Z\sigma=C_{i}(\Gamma_{\le t-2})$$
for $i\le t-2$,
using the definiton of reduced simplicial homology, we conclude
$\Tilde{\H}_{t-3}(\Gamma; K)=\Tilde{\H}_{t-3}(\Gamma_{\le t-2}; K)=0$. \end{proof}

\section{Lefschetz properties for $R/\init(I_t)$}\label{lef}
In this section, we study the weak and strong Lefschetz properties for $R/\init(I_t)$. 
The main results are \Cref{casem=n}, addressing the case $t=\mini\{m,n\}$, 
and \Cref{lasttheorem}, treating  the case  $t<\mini\{m,n\}$. 
Applying \Cref{lasttheorem}, we present counterexamples in \Cref{answer}  that provide a negative answer to Murai’s question regarding the preservation  of Lefschetz properties under square-free Gr\"obner degenerations (\Cref{muquestion}). We conclude this section with \Cref{wiebe1} and \Cref{gin}, which includes a  brief discussion on the sharpness of the bound given in  \Cref{lasttheorem}. \\

Let $d$ be a positive integer. Recall that a homogeneous ideal $J$ of a standard graded polynomial ring $ S$ has a \textit{$d$-linear resolution} if $\beta_{i,i+j}(J) = 0$ for all $i$ and for all $j\not=d$.
The following statement is an immediate consequence of a result of Conca and Varbaro  \cite[Corollary 2.7]{CV}.

\begin{remark}\label{linres}
Let $J$ be a homogeneous ideal of a standard graded polynomial ring $ S$ such that $\init_<(J)$ is a square-free monomial ideal for some monomial order $<$. Then $J$ has a {$d$-linear resolution} if and only if $\init_<(J)$ has a {$d$-linear resolution}.
\end{remark}
\begin{proof}
It follows directly from the definition that a homogeneous ideal $J$ of $S$ has a {$d$-linear resolution} if and only if $J$ is generated by homogeneous polynomials of degree $d$ and the Castelnuovo–Mumford regularity 
$$\reg(J)=\sup\{\beta_{i,i+j}(J)\not=0\}=d.$$
According to \cite[Corollary 2.7]{CV}, if the initial ideal $\init_<(J)$ is square-free, then the Castelnuovo–Mumford regularity of $J$ and of $\init_<(J)$ coincide, and thus the remark follows.
\end{proof}

\begin{proposition}\label{casem=n}
Let $t=\mini\{m,n\}$. Then $R/\init(I_t)$  has the SLP.
\end{proposition}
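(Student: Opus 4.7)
The plan is as follows. Because the hypotheses $t<\max\{m,n\}$ and $t=\mini\{m,n\}$ force $m\ne n$, by symmetry we may assume $m<n$, so that $t=m$ and $h=n-m+1$. The key classical input is that the ideal $I_m$ of maximal minors admits an $m$-linear resolution, namely the Eagon--Northcott complex. Since $\init(I_m)$ is square-free, \Cref{linres} transfers this linearity to the initial ideal, so that $\init(I_m)$ also has an $m$-linear resolution.

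Consequently, the only nonzero graded Betti numbers of $R/\init(I_m)$ occur at the bidegrees $(0,0)$ and $(i,i+m-1)$ for $1\le i\le h$. Via the socle identity (\ref{socle1}), the Artinian reduction $A:=R/(\init(I_m),\underline\theta)$ by a generic linear system of parameters $\underline\theta$ has socle concentrated in degree $m-1$; in particular $A_j=0$ for every $j\ge m$. Moreover, since $\init(I_m)$ is generated in degree $m$, we have $(\init(I_m))_j=0$ for every $j<m$, and therefore $A_j=(R/\underline\theta)_j$ in these degrees. As $\underline\theta$ is generic, $R/\underline\theta$ is (up to a linear change of coordinates) a polynomial ring in $h$ variables, so $\dim A_j=\binom{h+j-1}{j}$ for $0\le j\le m-1$, a strictly increasing sequence in $j$.

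With this explicit description of $A$ in hand, the SLP is almost automatic. Fix a generic linear form $L$ and any $s\ge 1$, and examine $\times L^s\colon A_j\to A_{j+s}$. If $j+s\le m-1$, the map is multiplication by $L^s$ inside the polynomial ring $R/\underline\theta$, hence injective (a generic $L$ is a nonzerodivisor); combined with the strict monotonicity of $\dim A_j$ this means the map has maximal rank. If $j+s\ge m$ then $A_{j+s}=0$, and if $j\ge m$ then $A_j=0$, so in either case the map vacuously has maximal rank. This covers every pair $(j,s)$, so $A$, and hence $R/\init(I_m)$, has the SLP.

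The only genuinely non-trivial input is the $m$-linearity of the Eagon--Northcott resolution of $I_m$, together with the Gr\"obner-degeneration transfer of linearity recorded in \Cref{linres}. Once these are in place, the Artinian reduction is forced to be the degree-$<m$ truncation of a polynomial ring in $h$ variables, and the Lefschetz conditions hold by inspection; I do not anticipate any serious obstacle.
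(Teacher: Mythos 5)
Your proof is correct and reaches the same destination as the paper's, but by a genuinely different route after the shared opening move. Both arguments begin by invoking the $m$-linearity of the maximal-minor ideal (you cite Eagon--Northcott, the paper cites \cite[Theorem 1.1(1)]{BC}) and transferring it to $\init(I_m)$ via \Cref{linres}. From there the paper appeals to \cite[Theorem 3.2]{RV} (relying on Cohen--Macaulayness) to pin down the Artinian reduction exactly as $K[Y_1,\dots,Y_h]/(Y_1,\dots,Y_h)^m$ and then checks SLP for that specific ring. You instead use the socle identity \eqref{socle1} from \Cref{rmkapp}: since the resolution is $m$-linear, the only nonvanishing $\beta_{h,h+j}$ occurs at $j=m-1$, so the socle of the Artinian reduction sits entirely in degree $m-1$ and hence $A_j=0$ for $j\ge m$; combining this with the trivial observation that $A_j=(R/\underline\theta)_j$ in degrees below $m$ recovers the Hilbert function directly, after which the Lefschetz conditions are immediate by injectivity of a generic linear form on a polynomial ring in $h\ge 2$ variables. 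Your route avoids the external characterization from \cite{RV}, trading it for a socle-degree argument already in the paper's toolkit; both routes implicitly use Cohen--Macaulayness of $R/\init(I_m)$ (yours through the isomorphism chain behind \eqref{socle1}, the paper's through the hypothesis of the RV result). The reduction to $m<n$ and $h=n-m+1\ge 2$ you perform at the outset is exactly what ensures the strict monotonicity you use in the injective case, so that step is not optional window-dressing.
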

\begin{proof}
\cite[Theorem 1.1 (1)]{BC} showed that $I_t$ has a $t$-linear resolution. Since $\init(I_t)$ is square-free,  \Cref{linres} implies that $\init(I_t)$ has a $t$-linear resolution as well.
If $\underline\theta=\theta_1, \dots, \theta_{\dim R/\init(I_t)}\in R_1$ is a linear system of parameters, then there exists a homogeneous ideal $J$ of the polynomial ring $K[Y_1, \dots, Y_h]$ such that $J$ has a $t$-linear resolution and
\begin{align*}
R/(\init(I_t), \underline\theta)\iso K[Y_1, \dots, Y_h]/J,
\end{align*}
where $h=\max\{m,n\}-t+1$ is the height of $\init(I_t)$. Moreover,  since $R/\init(I_t)$ is 
Cohen–Macaulay  (see \cite[Theorem 4.4.5]{BCRV}), 
if follows from \cite[Theorem 3.2]{RV} (see also \cite[Proposition 2.1]{CRV}) that 
$J$ has a $t$-linear resolution if and only if
$$\HF(J, t)=\binom{h+t-1}{t}.$$
Therefore, $J=(Y_1, \dots, Y_h)^t$. It is easy to verify that, for a general linear form $L\in (Y_1, \dots, Y_h)$, the multiplication map 
$$\times L^s: \big(K[Y_1, \dots, Y_h]/(Y_1, \dots, Y_h)^t\big)_j\la \big(K[Y_1, \dots, Y_h]/(Y_1, \dots, Y_h)^t\big)_{j+s}$$ is surjective for each $s\ge 1$ and for each $j \ge t-s$, and injective for each $j \le t-s-1$. Hence 
 $K[Y_1, \dots, Y_h]/J$ has the SLP, and it follows that $R/\init(I_t)$ has the SLP.
\end{proof}

\begin{theorem}\label{lasttheorem}
Let  $2\le t < \mini\{m,n\}$. Then
$R/\init(I_t)$ fails the WLP if one of the following conditions holds:
\begin{itemize}
\item[$i$)] $t=2$ and $mn\ge 16$,
\item[$ii$)] $t=3$ and $mn\ge 24$,
\item[$iii$)] $t\ge4$ and $mn\ge (t+1)(t+2)$.
\end{itemize}
In particular, if $m=n$, then $R/\init(I_t)$ fails the WLP for all $n\ge t+2.$ 
\end{theorem}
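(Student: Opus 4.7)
Plan: I will apply \Cref{corormkapp} with $j = t - 1$. The Main Theorem provides the first hypothesis, since $\beta_{h, h+t-1}(R/\init(I_t)) \ge t > 0$. For the second, I need to show that the multiplication map $\times L \colon [A]_{t-1} \to [A]_t$ fails to be surjective for every linear form $L$, where $A = R/(\init(I_t), \underline\theta)$ and $\underline\theta$ is a general linear system of parameters.

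The Hilbert function of $A$ in the relevant degrees is straightforward. Since $\init(I_t)$ and $I_t$ have equal Hilbert functions by Macaulay's theorem, and $R/I_t$ is Cohen--Macaulay of dimension $D = (m+n-t+1)(t-1)$, we have $\mathrm{Hilb}(A, z) = \mathrm{Hilb}(R/I_t, z)(1-z)^D$. Because $I_t$ is generated in degree $t$ by the $\binom{m}{t}\binom{n}{t}$ linearly independent $t$-minors, a direct Koszul-type calculation yields
\[
\dim_K A_{t-1} = \binom{h+t-2}{t-1}, \qquad \dim_K A_t = \binom{h+t-1}{t} - \binom{m}{t}\binom{n}{t},
\]
where $h = (m-t+1)(n-t+1)$. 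As in the proof of \Cref{rmkapp}, $[\soc(A)]_{t-1} \subseteq \ker(\times L)$, so $\dim_K \ker(\times L) \ge \beta_{h, h+t-1}(R/\init(I_t)) \ge t$. Hence $\times L$ fails to be surjective as soon as
\[
\binom{m}{t}\binom{n}{t} \;<\; \binom{h+t-2}{t} + t. \qquad (\dagger)
\]

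The remaining task is to verify $(\dagger)$ under each of the three given conditions. Substituting $h = (m-t+1)(n-t+1)$ and expanding turns $(\dagger)$ into a polynomial inequality in $m$ and $n$. A short case analysis on $\min\{m, n\}$ gives the following: for $t = 2$, $(\dagger)$ is equivalent (when $\min\{m,n\} \ge 3$) to $(m-1)(n-1)(2m + 2n - mn) < 8$, which holds precisely when $mn \ge 16$; for $t = 3$, the analogous manipulation gives the threshold $mn \ge 24$; and for $t \ge 4$, the rapid growth of $\binom{h+t-2}{t}$ in $h$ yields the threshold $mn \ge (t+1)(t+2)$. The ``in particular'' statement for $m = n \ge t+2$ follows from the symmetric specialization of $(\dagger)$, where $\binom{(n-t+1)^2 + t - 2}{t}$ dominates $\binom{n}{t}^2$.

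The hard part is the small number of tight borderline configurations where the bound $\beta \ge t$ from the Main Theorem falls just short of what $(\dagger)$ demands---most conspicuously $t = 4$ at $(m, n) = (5, 6)$, where $\binom{h+t-2}{t} = 70$ while $\binom{m}{t}\binom{n}{t} = 75$, leaving a one-unit gap. Such exceptional cases are dispatched by direct Macaulay2 verification via \Cref{rmk1.5}.
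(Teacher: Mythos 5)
Your proof is essentially the same as the paper's: both apply \Cref{corormkapp} at $j=t-1$ together with the Main Theorem, compute the Hilbert function of the Artinian reduction in degrees $t-1$ and $t$, reduce the failure of surjectivity of $\times L$ to a numerical inequality (the paper's $F_t(m,n)=\binom{h+t-2}{t}-\binom{m}{t}\binom{n}{t}\ge 0$), analyze it range by range, and defer the borderline pair $t=4$, $(m,n)\in\{(5,6),(6,5)\}$ to Macaulay2 via \Cref{rmk1.5}. Your one deviation is the use of $\dim_K\ker(\times L)\ge\beta_{h,h+t-1}\ge t$ to sharpen the criterion from $F_t(m,n)\ge 0$ to $F_t(m,n)>-t$; this is a correct observation, but it yields identical thresholds in every range (e.g.\ $F_4(5,6)=-5<-4$ still escapes $(\dagger)$), so nothing is gained over the paper's simpler ``not injective and equal dimensions implies not surjective'' handling of the case $F_t=0$. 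Be aware that the monotonicity of $F_t(m,n)$ in $m$ and $n$, which you dismiss as ``a short case analysis,'' is in fact the technical core of the proof and is established in the paper via inequality (\ref{last}) and the Vandermonde identity; a complete write-up must supply this.
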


The following  two statements serve as preparation for the proof of  \Cref{lasttheorem}.

\begin{lemma}\label{rmkapp2}
Let  $2\le t < \mini\{m,n\}$.
Set $$F_t(m,n)=\binom{h+t-2}{t}-\binom{m}{t}\binom{n}{t},$$ where $h=(m-t+1)(n-t+1)$ is the height of the ideal $\init(I_t)$.   If $F_t(m,n)\ge 0$, then $R/\init(I_t)$ fails the WLP.
\end{lemma}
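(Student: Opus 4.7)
My plan is to combine the Main Theorem (\Cref{maintheorem}) with \Cref{rmkapp}, reducing the failure of WLP to a one-line comparison of Hilbert-function values in degrees $t-1$ and $t$. By \Cref{openset}, it suffices to fix a generic linear system of parameters $\underline\theta = \theta_1, \dots, \theta_{mn-h}$ for $R/\init(I_t)$ and a generic linear form $L$, and to show that the multiplication map $\times L \colon A_{t-1} \to A_t$ fails to have maximal rank, where $A = R/(\init(I_t), \underline\theta)$.

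The first step is to invoke \Cref{maintheorem}, which gives $\beta_{h, h+t-1}(R/\init(I_t)) \ge t > 0$; then \Cref{rmkapp}, applied with $s = 1$ and $j = t-1$, forces $\times L \colon A_{t-1} \to A_t$ to fail to be injective, for every choice of $L$.

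Next, I would estimate the two relevant Hilbert-function values. Because $\init(I_t)$ is generated in degree $t$ and $R/(\underline\theta)$ is isomorphic to a polynomial ring in $h$ variables for generic $\underline\theta$, one has $\dim_K A_{t-1} = \binom{h+t-2}{t-1}$. For degree $t$, the $\binom{m}{t}\binom{n}{t}$ squarefree monomial generators of $\init(I_t)$ span a subspace of $[R/(\underline\theta)]_t$ of dimension at most $\binom{m}{t}\binom{n}{t}$, which yields
\begin{align*}
\dim_K A_t \ge \binom{h+t-1}{t} - \binom{m}{t}\binom{n}{t}.
\end{align*}
Pascal's identity $\binom{h+t-1}{t} = \binom{h+t-2}{t-1} + \binom{h+t-2}{t}$ combined with the hypothesis $F_t(m,n) \ge 0$ then gives
\begin{align*}
\dim_K A_t - \dim_K A_{t-1} \ge \binom{h+t-2}{t} - \binom{m}{t}\binom{n}{t} = F_t(m,n) \ge 0.
\end{align*}

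With $\dim_K A_{t-1} \le \dim_K A_t$, maximal rank of $\times L \colon A_{t-1} \to A_t$ would require injectivity, contradicting what was shown in the first step; hence $R/\init(I_t)$ fails the WLP. I do not expect any real obstacle in carrying this out: \Cref{maintheorem} has done the heavy lifting, and this lemma is essentially a dimension-count consequence of it.
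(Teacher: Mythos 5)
Your proposal is correct and follows essentially the same route as the paper: invoke \Cref{maintheorem} together with \Cref{rmkapp} to kill injectivity of $\times L \colon A_{t-1}\to A_t$, then compare $\dim_K A_{t-1}$ and $\dim_K A_t$ to rule out surjectivity. The only cosmetic difference is that you use a lower bound $\dim_K A_t \ge \binom{h+t-1}{t} - \binom{m}{t}\binom{n}{t}$ where the paper records equality (both work, since only the inequality is needed), and the paper phrases the final step via \Cref{corormkapp} rather than directly, but the content is identical.
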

\begin{proof}
Let $\underline\theta=\theta_1,\dots, \theta_{mn-h}\in R_1$ be a sequence of general linear forms.  
\Cref{maintheorem} and  \Cref{rmkapp} imply that
the multiplication map 
$$\times L: \Big[R/(\init(I_t), \underline\theta)\Big]_{t-1}\la \Big[R/(\init(I_t), \underline\theta)\Big]_{t}$$
fails  to be injective  for any linear form $L$. Therefore, by \Cref{corormkapp}, to prove \Cref{rmkapp2}, it suffices to show that the above map $\times L$ also fails to be surjective  for any linear form $L$.

Using again the fact that the $t$-minors of  $X$  form a Gr\"obner basis of $I_t$ with respect to a diagonal monomial order, we have $$\mu=\beta_0(\init(I_t))=\beta_0(I_t)=\binom{m}{t}\binom{n}{t}$$ is the minimal number of generators of $\init(I_t)$.
Since $\underline\theta$ 
is a linear system of parameters, there exists a homogeneous ideal $J=(f_1, \dots, f_\mu)$ of the polynomial ring $S=K[Y_1, \dots, Y_h]$ such that $R/(\init(I_t),\underline\theta)\iso S/J$, where  $f_i\in S$ are homogeneous with $\deg (f_i)=t$ for all $i$.
Therefore, the Hilbert function of $R/(\init(I_t), \underline\theta)$ in $t-1$ and in $t$ can be calculated as follows: 
\begin{align*}
    \HF(R/(\init(I_t), \underline\theta), t-1)=&\HF(S/J, t-1)=\binom{h+(t-1)-1}{h-1}\\
    =&\binom{h+t-2}{h-1}
\end{align*}
is the number of monomials of degree $t-1$ belonging to $S$, and
\begin{align*}
   & \HF(R/(\init(I_t), \underline\theta), t)=\HF(S/J, t)=\binom{h+t-1}{h-1}-\mu\\
    &=\binom{h+t-2}{h-1}+\binom{h+t-2}{h-2}-\binom{m}{t}\binom{n}{t}\\
    &=\HF(R/(\init(I_t), \underline\theta), t-1)+\binom{h+t-2}{t}-\binom{m}{t}\binom{n}{t}\\
&=\HF(R/(\init(I_t), \underline\theta), t-1)+F_t(m,n)
\end{align*}
is the number of monomials of degree $t$ belonging to the monomial basis of $S/J$. 
Consequently, if $F_t(m,n)>0$, then
$\dim_K\Big[R/(\init(I_t), \underline\theta)\Big]_{t-1}<\dim_K\Big[R/(\init(I_t), \underline\theta)\Big]_{t}$, which implies that
 the map
$$\times L: \Big[R/(\init(I_t), \underline\theta)\Big]_{t-1}\la \Big[R/(\init(I_t), \underline\theta)\Big]_{t}$$
fails to be surjective for any linear form $L\in R_1$. 
If  $F_t(m,n)= 0$, then  $\dim_K\Big[R/(\init(I_t), \underline\theta)\Big]_{t-1}=\dim_K\Big[R/(\init(I_t), \underline\theta)\Big]_{t}$,  and therefore $\times L$ 
necessarily fails to be surjective because it is not injective. Thus, if $F_t(m,n)\ge 0$,  then $R/\init(I_t)$ fails the WLP.
\end{proof}

\begin{remark}\label{gin456}
If $t=4$ and $(m,n)\in\{(5,6), (6,5)\}$, then  $R/\init(I_4)$ fails the WLP;
if $t=3$ and $(m,n)\in\{(4,5), (5,4)\}$,  then  $R/\init(I_3)$  has the WLP but fails the SLP.
\end{remark}

\begin{proof}
Assume that $t=4$, $m=5$, and $n=6$.
Since the intersection of two nonempty Zariski open sets is nonempty, it follows from \Cref{openset} that if there exists a Zariski open set $U$ such that for any sequence of linear forms $\theta_1,\dots,\theta_{24}, L \in U$, the linear form $L$ is not a Lefschetz element for $R/(\init(I_4),\underline\theta)$, then $R/\init(I_4)$ fails the WLP. Moreover, by \Cref{7441} and \Cref{SRring}, for each $t,m,n\in\N$  such that $2\le t\le \mini\{m,n\}$ and $t<\max\{m,n\}$, there are $t(t-1)$ variables of $R$ that do not appear in $\init(I_t)$. In the present case, $t(t-1)=12$. To simplify notation, we relabel the $12$ variables that do not appear in $\init(I_4)$ as $z_1, \dots, z_{12}$, and the remaining variables as $y_1, \dots, y_{18}$, ordered so that
$y_1 > y_2 > \dots > y_{18}$,
in such a way that the relabeling is compatible with the given ordering
$X_{1,1} > X_{1,2} > \dots > X_{m,n}$.
That is, whenever $X_{a,b}$ and $X_{c,d}$ are relabeled as $y_i$ and $y_j$, respectively, and $X_{a,b} > X_{c,d}$, then $i < j$. 
Hence, we may take $U$ to be the Zariski open subset determined by $z_1,\dots, z_{12}$ together with sufficiently general linear forms $\delta_1, \dots, \delta_{12}, l$ in the variables $y_1,\dots,y_{18}$. 

Set   $A=K[y_1,\dots, y_{18}]$.
\Cref{ginconca} implies that the Hilbert function of $A/(\init(I_4), \underline \delta, l)$ is equal to the
Hilbert function of $A/(\gin(\init(I_4)), y_6,\dots, y_{18})$, and  the Hilbert function of  $A/(\init(I_4), \underline \delta)$ is equal to the
Hilbert function of $A/(\gin(\init(I_4)), y_7,\dots, y_{18})$.
Furthermore, a Macaulay2 \cite{M2} \cite{GenericInitialIdealSource} computation showed that the degree $4$ component of $\gin(\init(I_4))$ 
is
\begin{align*}
\gin(\init(I_4))_4=(y_1, y_2,y_3, y_4)^4+(y_1, y_2,y_3, y_4)^3y_5+
(y_1, y_2,y_3, y_4)^2y_5^2+\\
(y_1y_5^3, y_2y_5^3, y_3y_5^3, y_1^3y_6, y_1^2y_2y_6, y_1y_2^2y_6, y_2^3y_6, y_1^2y_3y_6, y_1y_2y_3y_6, y_2^2y_3y_6).
\end{align*}
Therefore,
\begin{align*}
 \HF(R/(\init(I_4), \underline z, \underline \delta, l), 4)= &\HF(A/(\init(I_4), \underline \delta, l), 4)\\
 = &\HF(A/(\gin(\init(I_4)), y_6,\dots, y_{18}), 4)\\
 =&2\not=0.
\end{align*}
While
\begin{align*}
\HF(R/(\init(I_4), \underline z, \underline \delta), 4)=\HF(A/(\gin(\init(I_4)), y_7,\dots, y_{18}), 4)=51,
\end{align*}
and
\begin{align*}
\HF(R/(\init(I_4), \underline z, \underline \delta), 3)=\HF(K[y_1,\dots, y_{6}], 3)=56>51.
\end{align*}
It follows  that the multiplication map $$\times l: \big[R/(\init(I_4), \underline z, \underline \delta)\big]_3\la \big[R/(\init(I_4), \underline z, \underline \delta)\big]_4$$ fails to have maximal rank. \footnote{In the proof above, one may equivalently consider general linear forms in $R_1$ or choose a different Zariski open subset. The specific choice of the open set $U$ is made only to simplify the computation of the generic initial ideal. 
Moreover, a complete description of the ideal $\gin(\init(I_4))$, including all its generators, is available at \url{https://github.com/hoyu26/gininI456}.}

Similarly, for the case $t = 3$, $m = 4$, and $n = 5$, we relabel the variables
$X_{1,1}, \dots, X_{4,5}$
as
$z_1, \dots, z_6, \; y_1, \dots, y_{14}$,
where $z_1, \dots, z_6$ are the variables that do not appear in $\init(I_3)$, and the ordering on the $y_i$ is taken to be compatible with the ordering on the $X_{a,b}$.
The generic initial ideal of $\init(I_3)$ with respect to the reverse lexicographic order is 
\begin{align*}
\gin(\init(I_3))=(y_1, y_2,y_3, y_4,y_5)^3+y_6(y_1^2, y_1y_2,y_2^2, y_1y_3, y_2y_3)+\\
y_6^2(y_3^2, y_1y_4, y_2y_4, y_2y_4, y_4^2, y_1y_5, y_2y_5, y_3y_5)+y_6^3(y_1, y_2)+\\
(y_4y_5y_6^3, y_5^2y_6^3, y_3y_6^4, y_4y_6^4, y_5y_6^4,  y_6^5).
\end{align*}
In particular, 
\begin{align*}
\gin(\init(I_3))_4=(y_1, y_2,y_3, y_4,y_5)^4 +(y_1, y_2,y_3, y_4,y_5)^3y_6+\\
(y_1, y_2, y_3)(y_1, y_2,y_3, y_4,y_5)y_6^2+y_4^2y_6^2+(y_1, y_2)y_6^3
+J
\end{align*}
with $J\se (y_7,\dots, y_{14})$. 
Set $A'=K[y_1,\dots, y_{14}]$. If $\delta'_1, \dots, \delta'_{8}, l'\in A'_1$ are general linear forms, applying \Cref{ginconca} once again, it follows that  the Hilbert function of  $A'/(\init(I_3), \underline \delta')$ is equal to the
Hilbert function of $A'/(\gin(\init(I_3)), y_7,\dots, y_{14})$ which is $(1,6,21,16,6,0,\dots)$. Moreover,
\begin{align*}
\HF(A'/(\init(I_3), \underline \delta', l'^3), 4)
 = \HF(A'/(\gin(\init(I_3)), y_6^3, y_7,\dots, y_{14}), 4)=2.
\end{align*}
Therefore, the multiplication map $$\times l'^3: \big[R/(\init(I_3), z_1, \dots, z_6, \underline \delta')\big]_1\la \big[R/(\init(I_3), z_1, \dots, z_6, \underline \delta')\big]_4$$ fails to have maximal rank. Furthermore, since
\begin{align*}
\HF(A'/(\init(I_3), \underline \delta', l'), j)
 = \HF(A'/(\gin(\init(I_3)), y_6, y_7,\dots, y_{14}), j)\\
 =\HF(K[y_1,\dots, y_5]/(y_1, y_2,y_3, y_4,y_5)^3,j) \\
= \max\{\HF(A'/(\init(I_3), \underline \delta', l'), j)-\HF(A'/(\init(I_3), \underline \delta', l'), j-1),0\}
\end{align*}
 for all $j$, it follows that $R/\init(I_3)$ has the WLP.
  \end{proof}

\begin{proof}[Proof of \Cref{lasttheorem}]
By \Cref{rmkapp2}, a sufficient condition for ``$R/\init(I_t)$ fails the WLP'' is that $$F_t(m,n)=\binom{h+t-2}{t}-\binom{m}{t}\binom{n}{t} \ge 0.$$
Thus, to understand when $R/\init(I_t)$ fails the WLP, we first study when 
$F_t(m,n)\ge 0$.\\
Note that if $m=n=t+1$, then 
\begin{align*}
F_t(t+1,t+1)=&\frac{(t+1)(t+2)}{2}-(t+1)^2\\
=&-\frac{t(t+1)}{2}
<0
\end{align*}
 for all $t\ge 2$. Hence we may assume that $(m-t)(n-t)\ge2$.\\
For each $0\le j\le t-2$,  
\begin{align*}
&(h+j)(j+2)-(m-t+2+j)(n-t+2+j)\\
=&(j+2)\Big(h+j-(m+n-2t)-(j+2)\Big)-(m-t)(n-t)\\
=&(j+2)\Big((m-t)(n-t)-1\Big)-(m-t)(n-t)\\
\ge&2\Big((m-t)(n-t)-1\Big)-(m-t)(n-t)\\
=&(m-t)(n-t)-2\\
\ge& 0,
\end{align*}
and so
$$\prod_{j=0}^{t-2}(h+j)\ge \prod_{j=0}^{t-2}\frac{(m-t+2+j)(n-t+2+j)}{j+2}.$$
It follows that
$$\frac{n-t+1}{(t-1)!}\prod_{j=0}^{t-2}(h+j)\ge\frac{n-t+1}{(t-1)!} \prod_{j=0}^{t-2}\frac{(m-t+2+j)(n-t+2+j)}{j+2},$$
that is,
\begin{align}\label{last}
(n-t+1)\binom{h+t-2}{t-1}\ge\binom{m}{t-1}\binom{n}{t}.
\end{align}
Recall the well-known Vandermonde identity, which states that
\begin{align*}\label{Vi}
    \binom{u+v}{w} = \sum_{j = 0}^w\binom{u}{j} \binom{v}{w-j}
    \end{align*} for all $u,v,w\in\N.$
Therefore, by using the Vandermonde identity and applying Inequality (\ref{last}),
\begin{align*}
&F_t(m+1,n)-F_t(m,n)\\
=&\underbrace{\binom{h+(n-t+1)+t-2}{t}}_{=\sum_{j=0}^{t}\binom{n-t+1}{j}\binom{h+t-2}{t-j}}-\underbrace{\binom{m+1}{t}\binom{n}{t}}_{=\binom{m}{t}\binom{n}{t}+\binom{m}{t-1}\binom{n}{t}}-\binom{h+t-2}{t}+\binom{m}{t}\binom{n}{t}\\
=&\sum_{j=1}^{t}\binom{n-t+1}{j}\binom{h+t-2}{t-j}-\binom{m}{t-1}\binom{n}{t}\\
=&(n-t+1)\binom{h+t-2}{t-1}-\binom{m}{t-1}\binom{n}{t}+\sum_{j=2}^{t}\binom{n-t+1}{j}\binom{h+t-2}{t-j}\\
\ge&\sum_{j=2}^{t}\binom{n-t+1}{j}\binom{h+t-2}{t-j}\\\ge &\binom{n-t+1}{2}>0. 
\end{align*}
Symmetrically, we also have $F_t(m,n+1)>F_t(m,n)$ for all $m,n$ such that $(m-t)(n-t)\ge2$. 
Since
\begin{align*}
F_t(t+1,t+2)=F_t(t+2,t+1)&=\binom{4+t}{t}-\binom{t+1}{t}\binom{t+2}{t} \notag\\
&=\frac{(t+1)(t+2)}{24}t(t-5), 
\end{align*}
it follows that 
\begin{itemize}
\item $F_t(m,n)\ge 0$ for all $t\ge 5$ and for all $mn\ge (t+1)(t+2)$. 
\end{itemize}
Additionally, some direct calculations show that $F_2(3,6)=F_2(6,3)=0$,  $F_t(t+1,t+3)=F_t(t+3,t+1)> 0$ for $t=3,4$, and $F_t(t+2,t+2) \ge 0$  for $2\le t\le 4$. Thus, 
\begin{itemize}
\item $F_2(m,n)\ge 0$ for all $mn\ge 16$,
\item $F_3(m,n)\ge 0$ for  all $mn\ge 24$,
\item $F_4(m,n)\ge 0$ for all $mn\ge 35$.
\end{itemize}
Moreover,  if $t=4$ and $(m,n)\in\{(5,6), (6,5)\}$, then  $R/\init(I_4)$ fails the WLP by \Cref{gin456}.
Therefore, if  one of the following conditions holds:
\begin{itemize}
\item[$i$)] $t=2$ and $mn\ge 16$,
\item[$ii$)] $t=3$ and $mn\ge 24$,
\item[$iii$)] $t\ge4$ and $mn\ge (t+1)(t+2)$,
\end{itemize}
then $R/\init(I_t)$ fails the WLP.
\end{proof}

\begin{remark}\label{answer}
Let us focus on the case $m=n$. In their paper \cite{SW}, Soll and Welker defined a monomial order $\prec$ (\cite[Definition 28]{SW}) on $R=K[X]$, and they conjectured that the simplicial complex $\Delta$ defined by $\init_{\prec}(I_t)$ is a simplicial $(d-1)$-sphere 
for all $t \le n$ (\cite[Conjecture 13]{SW}, see also \cite[Conjecture 17]{SW}). 
They proved this conjecture for the cases $t=2$ and $t=n-1$.
In \cite[Theorem 7.1]{RS} \footnote{However, their proof was not formally published and is only available in the arXiv version of their paper.}, Rubey and Stump  provided a complete proof of this conjecture for all $t$.

Moreover, in proving and applying McMullen’s g-conjecture, which states that $g$-vector of a simplicial sphere is the $f$-vector of a multicomplex  \cite{Mc} (for the proof see \cite{St} \cite{BL}, see also \cite{APP} \cite{PP}), Adiprasito  showed that the corresponding Stanley–Reisner ring  of a simplicial $(d-1)$-sphere has the SLP \cite[Theorem I]{Ad}. Consequently,  $R/\init_{\prec}(I_t)$ has the SLP for all $t \le n$.  It follows from \Cref{WiebeMurai} that $R/I_t$ has the SLP for all $t \le n$. 
Therefore, for $n\ge t+2$, \Cref{lasttheorem} provides a family of ideals $I_t$ such that  $R/I_t$ has the SLP and $\init(I_t)$ is square-free, but  $R/\init(I_t)$ fails  the WLP.
\end{remark}

\begin{examples}
We further present the following counterexamples, which follow directly from \Cref{gin456} and \Cref{lasttheorem}. Assume $t=3$ and $m=4$.
\begin{itemize}
\item[$i)$] If $n=5$,  then   $R/\init(I_3)$ has the WLP but fails the SLP, while $R/I_3$ has the SLP;
\item[$ii)$] if  $n=6$, then  $R/\init(I_3)$ fails  the WLP while $R/I_3$ has the SLP.  
\end{itemize} The validity of the SLP of $R/I_3$  in the above two cases can be verified using the Macaulay2 code provided in \Cref{M2code}.
Therefore, the answer to \Cref{muquestion} remains negative even in the case $m\not=n$. 
\end{examples}

We conclude this paper by discussing the sharpness of the bound provided in  \Cref{lasttheorem}. First, let us recall the following result of Wiebe \cite[Proposition 2.8]{Wi}:
\begin{proposition}[Wiebe]\label{wiebe1} 
Let  $S=K[X_1,\dots, X_N]$ be a standard graded polynomial ring and let $J$ be a homogeneous ideal of $S$. If $\gin(J)$ is the generic initial ideal
of $J$ with respect to the reverse lexicographic order, then $S/J$ has the WLP (resp. SLP) if and only if $S/\gin(J)$ has the WLP (resp. SLP).
\end{proposition}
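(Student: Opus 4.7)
The plan is to treat the two implications separately, using rather different tools.

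For the ``if'' direction, recall that by definition $\gin(J)$ is the initial ideal of $gJ$ with respect to the reverse lexicographic order, for a generic $g \in \mathrm{GL}(S_1)$. Since the Lefschetz properties are preserved by linear change of coordinates, \Cref{WiebeMurai} applied to the ideal $gJ$ (whose initial ideal is $\gin(J)$) yields the conclusion immediately.

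The ``only if'' direction rests on the following standard property of reverse lexicographic gin, due to Bayer--Stillman and Galligo: for generic linear forms $\ell_1, \dots, \ell_k \in S_1$,
$$\gin\bigl(J + (\ell_1, \dots, \ell_k)\bigr) = \gin(J) + (X_N, X_{N-1}, \dots, X_{N-k+1}).$$
Two consequences will be used. First, the variables $X_N, \dots, X_{N-d+1}$ form a linear system of parameters for $S/\gin(J)$, where $d = \dim S/J$. Second, combined with Macaulay's theorem on the invariance of Hilbert functions under taking initial ideals, the displayed identity yields, for all $j$ and all $k \le d+1$,
$$\HF\bigl(S/(\gin(J), X_N, \dots, X_{N-k+1}),\, j\bigr) = \HF\bigl(S/(J, \ell_1, \dots, \ell_k),\, j\bigr).$$
Applying this with $k=d$ and $k=d+1$ for a generic parameter system $\theta_1, \dots, \theta_d$ and a generic weak Lefschetz element $L$ (guaranteed by \Cref{openset}), the WLP of $S/J$---equivalent via \Cref{rmkapp} to a numerical identity relating the Hilbert functions of $S/(J, \underline\theta)$ and $S/(J, \underline\theta, L)$---transports verbatim to $S/\gin(J)$, showing that $X_{N-d}$ is a weak Lefschetz element for $S/\gin(J)$ with parameter system $X_N, \dots, X_{N-d+1}$.

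The main obstacle is the SLP case, where the appropriate identity is
$$\gin\bigl(J + (\ell_1, \dots, \ell_d) + (L^s)\bigr) = \gin(J) + (X_N, \dots, X_{N-d+1}) + (X_{N-d}^s),$$
involving a power of a generic linear form rather than an additional linear form. Extending the Bayer--Stillman--Galligo identity to this mixed setting is the delicate technical step; it still follows from the rev-lex gin formalism (for instance, by iterating the linear case on successive generic hyperplane sections and then analyzing the initial ideal of $L^s$ modulo $\gin(J) + (X_N, \dots, X_{N-d+1})$). Once it is in place, exactly the same Hilbert-function translation as in the WLP case shows that $X_{N-d}$ is a strong Lefschetz element for $S/\gin(J)$.
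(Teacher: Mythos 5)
Your overall strategy is sound and is close in spirit to the paper's: both proofs reduce the Lefschetz question for $S/J$ and $S/\gin(J)$ to the same Artinian quotient by passing to Hilbert functions. Your shortcut for the ``if'' direction via \Cref{WiebeMurai} (applied to $gJ$, using that Lefschetz properties are invariant under linear change of coordinates) is a clean, correct alternative to the paper, which instead handles both directions symmetrically by showing that $S/J$ and $S/\gin(J)$ yield identical Hilbert functions after Artinian reduction.

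The gap is precisely where you say it is, and you should not leave it as a ``still follows from the rev-lex gin formalism.'' The required identity for the SLP case,
\[
\HF\big(S/(J,\underline\theta,L^s),j\big)=\HF\big(S/(\gin(J),\, X_{N},\dots,X_{N-d+1},\, X_{N-d}^s),j\big),
\]
is \emph{not} a routine iteration of the Bayer--Stillman--Galligo hyperplane-section statement: that result concerns adding a single generic linear form, and the presence of the power $L^s$ breaks the straightforward induction on successive hyperplane sections. This is exactly the content the paper obtains from Conca's Lemma~1.2 in \cite{Co}, combined with the idempotence $\gin(\gin(J))=\gin(J)$ to get the companion identity for $\gin(J)$ itself. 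Without citing that lemma (or supplying an actual proof), your ``only if'' direction for SLP is incomplete. Two further minor points: the ideal-level identity you write, $\gin(J+\underline\theta+(L^s))=\gin(J)+(X_N,\dots,X_{N-d+1})+(X_{N-d}^s)$, is stronger than needed and not obviously true in general; the Hilbert-function version suffices and is what is actually supplied by Conca's lemma. Also, the numerical criterion encoding WLP/SLP that you invoke is really \Cref{rmk1.5} rather than \Cref{rmkapp}, which is the Betti-number/socle statement.
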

\begin{proof}
This result was originally proved for Artinian algebras, that is, when $d=\dim S/J=0$. If $d\ge 1$, then \Cref{ginconca} together with the well-known result: $\gin(\gin(J))=\gin(J)$ (see, for example, \cite[Corollary 4.2.7]{monomialideals}),  implies that the following equalities of Hilbert functions hold for each $j\ge 0$ and for each $s\ge 1$: 
\begin{align*}
\HF(S/(J,\underline\theta,L^s),j)&=\HF(S/(\gin(J), X_{N-d}^s, X_{N-d+1},\dots, X_{N}),j)\\
&=\HF(S/(\gin(\gin(J)), X_{N-d}^s, X_{N-d+1}, \dots, X_{N}),j)\\
&=\HF(S/(\gin(J),\underline\theta,L^s),j)
\end{align*}
 where $\theta_1, \dots, \theta_{d}, L\in S_1$ is a sequence of general linear forms and $\underline\theta=\theta_1, \dots, \theta_{d}$. Therefore, by applying \Cref{openset}, we prove this lemma  using the same reasoning as in the proof of \cite[Proposition 2.8]{Wi}. Namely, $S/J$ has the WLP (resp. SLP) if and only if $X_{N-d}$  is a weak (resp. strong) Lefschetz element on $S/(\gin(J), X_{N-d+1}, \dots, X_{N-1})$, if and only if $S/\gin(J)$ has the the WLP (resp. SLP).
\end{proof}

This result lead directly to the following consequence:
\begin{corollary}\label{gin}
Let  $S=K[X_1,\dots, X_N]$ be a standard graded polynomial ring and let $J$ be a homogeneous ideal  of $S$. Let $<$ be a monomial order on $S$ such that 
$\gin(\init_<(J))=\gin(J)$. Then $S/J$ has the WLP (resp. SLP) if and only if $S/\init_<(J)$ has the WLP (resp. SLP).
%
\end{corollary}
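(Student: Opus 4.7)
The plan is to deduce the corollary directly from Wiebe's result (Proposition \ref{wiebe1}) by a two-step transfer through the generic initial ideal. Since the reverse lexicographic gin is an invariant that Wiebe's proposition shows preserves both the WLP and the SLP, the hypothesis $\gin(\init_<(J))=\gin(J)$ forces $S/J$ and $S/\init_<(J)$ to share the common intermediate algebra $S/\gin(J)$, from which the equivalence follows.

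More precisely, I would argue as follows. First, apply Proposition \ref{wiebe1} to the ideal $J$: this gives that $S/J$ has the WLP (respectively SLP) if and only if $S/\gin(J)$ has the WLP (respectively SLP), where $\gin$ is taken with respect to the reverse lexicographic order. Next, apply Proposition \ref{wiebe1} again, this time to the ideal $\init_<(J)$: this yields that $S/\init_<(J)$ has the WLP (respectively SLP) if and only if $S/\gin(\init_<(J))$ has the WLP (respectively SLP). Finally, use the standing hypothesis $\gin(\init_<(J))=\gin(J)$ to conclude that the two rings $S/\gin(J)$ and $S/\gin(\init_<(J))$ coincide, so the two equivalences chain together to give the desired statement.

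There is essentially no obstacle here beyond correctly invoking Proposition \ref{wiebe1} twice; the work has already been done in that proposition, which handles the nontrivial passage to a linear system of parameters via the Hilbert-function identities in Conca's lemma. The only point worth emphasizing in the write-up is that Proposition \ref{wiebe1} is an ``if and only if'' statement that applies to \emph{any} homogeneous ideal of $S$, so it is legitimate to feed $\init_<(J)$ into it as well as $J$ itself. With that remark in place the proof of Corollary \ref{gin} reduces to one line.
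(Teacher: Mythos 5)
Your argument is correct and is exactly the intended one: the paper states that Corollary \ref{gin} follows directly from Proposition \ref{wiebe1}, and your two applications of that proposition (to $J$ and to $\init_<(J)$) chained through the hypothesis $\gin(\init_<(J))=\gin(J)$ are precisely the implicit proof.
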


By running  again the Macaulay2 code from \Cref{M2code}, we obtain that 
\begin{itemize}
\item if $t=2$ and $mn\le 15$, or 
\item if $t=3$ and $m=n=4$, 
\end{itemize} 
then $R/\init(I_t)$ has the SLP. Moreover, for a specific integer $t=m-1=n-1$, one can verify using Macaulay2 computations that $\gin(\init(I_t))=\gin(I_t)$, which implies that $R/\init(I_t)$ has the SLP by \Cref{answer} and \Cref{gin}. Therefore, we propose the following question:
\begin{question}\label{conj}
Does the equality $\gin(\init(I_t))=\gin(I_t)$ hold for all $t=m-1=n-1$?
\end{question}
In particular, if this question has a positive answer, 
then  the bound provided in  \Cref{lasttheorem} is sharp. 


\section{Macaulay2 code}\label{M2code}
The following Macaulay2 code is available at \url{https://github.com/hoyu26/checkLP}.
{\small
\begin{lstlisting}
--compute the first n+1 values of HF(ring I/I,-).
hilF = (I,n) -> (
    for i to n list hilbertFunction (i,I)
    )

--check whether the ideal I has the WLP/SLP.
checkLP=(I)->(
theta=ideal();
d=dim ((ring I)/I);
for i from 1 to d do theta=ideal(random(1,ring I))+theta;
J=ideal leadTerm (I+theta);
A=QQ[drop (flatten entries vars ring I, d)];
JA=sub (J,A);
a=max degree numerator reduceHilbert hilbertSeries (JA);
hf=hilF(JA,a);
print ("HF=", hf);
l=random(1,ring I);
s=0;
for v from 1 to a do (
    use ring I;
    thetal=ideal(l)^v+theta;
    ILs=ideal leadTerm (I+thetal);
    ILsA=sub(ILs,A);
    H1=hilF(ILsA,a);
    H0={1};
    if v>1 then for j from 1 to v-1 do H0=H0|{(hf#j)};
    for i from 0 to a-v do (
	h=max{0,hf#(i+v)-hf#i};
	H0=H0|{h};
	); 
    print ("s=",v);
    print ("max{0, nabla^s HF}=", H0);
    print ("the Hilbert function of the quotient by L^s 
              is", H1);
    if H0!=H1 then break;
    s=v; 
    );
if s==0 then print "The chosen linear form is not a LE for 
this ideal." else (
    if  s==a then print "This ideal has the SLP." else print 
    "This ideal has the WLP.";
    );
)

--return the determinantal ideal generated by the t-minors of 
a generic m*n matrix:
Itmn=(t,m,n)->(
    R:=QQ[x_(1,1)..x_(m,n)];
    X:=transpose genericMatrix(R,x_(1,1),n,m);
    return minors(t,X);
    )

--Example (verify the algebra QQ[x_(1,1),...,x_(4,5)/in(I_3) 
has the WLP):
checkLP ideal leadTerm Itmn(3,4,5)                                                                                                                                                                                                                                                                                                                                                                                                                                                                                                                                                                                                                                                                                                                                                                        
\end{lstlisting}
}

\bigskip

{\bf Acknowledgments}: The author genuinely thanks Satoshi  Murai for posing an interesting question that motivated this work, as well as for his helpful comments and suggestions. Deep gratitude is extended to Matteo Varbaro for insightful discussions that contributed to this work, and to Aldo Conca for providing valuable suggestions at the outset of this work, which helped guide it in the right direction. Sincere thanks are also given to Le Tuan Hoa and Alexandra Seceleanu for reading an earlier version of the manuscript and providing helpful suggestions regarding its organization. Special thanks are further extended to Luca Fiorindo, Francesco Strazzanti and Manolis Tsakiris for engaging discussions on various questions related to this work. 
Thanks are due to Matt Larson for reading the first version posted on the arXiv and pointing out a missing assumption in \Cref{rmkapp}.

The author also acknowledges the ``Lefschetz Properties in Algebra, Geometry, Topology, and Combinatorics Preparatory School," which took place in Krak\'ow, Poland,  from May 5 to 11, 2024. In particular, heartfelt appreciation to the organizers Tomasz Szemberg and Justyna Szpond, the instructors Pedro Macias Marques, Juan Migliore and Alexandra Seceleanu, and the assistances Nancy Abdallah, Giuseppe Favacchio and Lisa Nicklasson, for creating an enriching environment in which the author gained significant insight into the Lefschetz properties. 

This paper originated during the author's time at the Department of Mathematics at the University of Genoa and continued at the Vietnam Institute for Advanced Study in Mathematics. The author wishes to express gratitude to both institutions for their support. Particular thanks are due to  Ngo Viet Trung and Le Tuan Hoa for their kind hospitality during the author’s stay in Hanoi.
The author is partially supported by PRIN  2020355B8Y ``\textit{Squarefree Gr\"obner degenerations, special varieties and related topics}". 

\bibliographystyle{plain}
\bibliography{refs}

\end{document}